\newtheorem{thm}{Theorem}
\newtheorem{thmx}{Theorem}
\newtheorem{cor}[thm]{Corollary}
\newtheorem{lemma}[thm]{Lemma}
\newtheorem{fact}[thm]{Fact}
\theoremstyle{definition}
\newtheorem{defin}[thm]{Definition}
\theoremstyle{remark}
\newtheorem{remark}[thm]{Remark}
\newcommand{\Rea}{\mathbb R}
\newcommand{\Nat}{\mathbb N}
\newcommand{\Rat}{\mathbb Q}
\newcommand{\A}{\mathcal A}
\newcommand{\F}{\mathcal F}
\newcommand{\U}{\mathcal U}
\newcommand{\M}{\mathcal M}
\newcommand{\PP}{\mathcal P}
\newcommand{\SSS}{\mathcal S}
\newcommand{\Id}{\operatorname{Id}}
\def \rng {\operatorname{Rng}}
\def \ker {\operatorname{Ker}}
\newcommand{\Span}{\operatorname{span}}
\newcommand{\closedSpan}{\overline{\operatorname{span}}}
\def \dens {\operatorname{dens}}
\def \Lip {\operatorname{Lip}}
\begin{document}
\title[Note on almost isometric ideals]{Note on almost isometric ideals and local retracts in Banach and metric spaces}

\author[L. Candido]{Leandro Candido}
\author[M. C\' uth]{Marek C\'uth}
\author[O. Smetana]{Ond\v{r}ej Smetana}
\email{leandro.candido@unifesp.br}
\email{cuth@karlin.mff.cuni.cz}
\email{ond.smetana@gmail.com}

\address[L.~Candido]{Universidade Federal de S\~{a}o Paulo - UNIFESP. Instituto de Ci\^{e}ncia e Tecnologia. Departamento de Matematica. S\~{a}o Jos\'e dos Campos - SP, Brasil}

\address[M.~C\' uth, O.~Smetana]{Charles University, Faculty of Mathematics and Physics, Department of Mathematical Analysis, Sokolovsk\'a 83, 186 75 Prague 8, Czech Republic}

\subjclass[2020] {46B07,46B26,51F30 (primary), and 03C30 (secondary)}

\keywords{Almost isometric ideals; almost isometric local retracts; Separable reduction; rich family; method of suitable models}
\thanks{L. Candido was supported by Funda\c c\~ao de Amparo \`a Pesquisa do Estado de S\~ao Paulo - FAPESP no. 2023/07291-2. M. C\'uth was supported by the GA\v{C}R project 23-04776S.}

\begin{abstract}
We exhibit a new approach to the proofs of the existence of a large family of almost isometric ideals in nonseparable Banach spaces and existence of a large family of almost isometric local retracts in metric spaces. Our approach also implies the existence of a large family of nontrivial projections on every dual of a nonseparable Banach space. We prove three possible formulations of our results are equivalent. Some applications are mentioned which witness the usefulness of our novel approach.
\end{abstract}
\maketitle

One of the main issues when trying to understand the geometry of a given Banach is to understand its nontrivial complemented subspaces (by nontrivial we mean not of finite dimension or codimension). On the one hand we have examples of Banach spaces with no nontrivial complemented subspaces, the first one constructed by Gowers and Maurey for separable spaces \cite{Gow93}, more recent result by Koszmider, Shelah and \'Swi\c{e}tek is that consistently there are such examples of arbitrary density \cite{KSS18}. On the other hand, for many nonseparable Banach spaces we are able to find a very nice structure on nontrivial complemented subspaces with consequences for the structure of the space itself, quite a wide class of spaces with this feature are Banach spaces admitting a projectional skeleton (including e.g. all the reflexive spaces, more generally also all the WCG spaces), where the structure of nontrivial complemented subspaces enables us to prove that those spaces admit Markushevich basis and LUR renorming, see e.g. \cite[Chapter 17]{KKLBook} and \cite{Kal20}.

It is also well-known that nontrivial complemented subspace exist in every dual Banach space of density greater than continuum, which is a result originally proved by Heinrich and Mankiewicz \cite[Corollary 3.8]{HM82}. Later, inspired Lindenstrauss' finite dimensional lemma, the proof was simplified by Sims and Yost \cite{Sims89} using the notion of a \emph{locally complemented subspace}. In \cite{God93} the authors realized there is a connection to the notion of $M$-ideals, and therefore, they used the term \emph{ideal} instead of locally complemented subspace. This notion was further developed and in \cite{Abr14} Abrahamsen, Lima and Nygaard introduced the notion of an \emph{almost isometric ideal} and found that almost isometric ideals inherit diameter $2$ properties and the Daugavet property.

Given a Banach space $X$ and its subspace $Y\subset X$, we say $Y$ is an \emph{almost isometric ideal} in $X$ if for any finite-dimensional subspace $E\subset X$ and any $\varepsilon>0$ there exists a bounded linear operator $T:E\to Y$ such
that $Tx=x$ for $x\in E\cap Y$ and $T$ is $(1+\varepsilon)$-isomorphic embedding (that is, we have $(1+\varepsilon)^{-1}\|x\|\leq \|Tx\|\leq(1+\varepsilon)\|x\|$ for every $x\in E$). If instead of $(1+\varepsilon)$-isomorphic embedding we require only $\|T\|\leq (1+\varepsilon)$ then we say $Y$ is \emph{ideal} in $X$. The connection to the structure of complemented subspaces is that $Y\subset X$ is an ideal if and only if $Y^\perp$ is kernel of a contractive projection on $X^*$, see e.g. \cite[Introduction]{God93}.

An essential result is \cite[Theorem 1.5]{Abr15} by which for every Banach space $X$ and its subspace $Z\subset X$ there exists almost isometric ideal $Y\subset X$ with $Z\subset Y$ and $\dens Y = \dens Z$. This has further applications, see e.g. \cite[Section 3]{Abr15} or also some more recent ones \cite{Band, Zoca21, Zoca22}. Our first main contribution is that we construct quite a large family of almost isometric ideals in every Banach space. There are more ways of formulating our result, the one which seems to be the most attractive one is through the notion of exceedingly rich families, which we introduce here.

\begin{defin}
Given a (nonseparable) topological space $X$ we say that a family of its closed subspaces $\SSS$ is \emph{exceedingly rich family} provided that:
\begin{enumerate}[label=\textnormal{(R-\alph*)}]
    \item\label{it:cofinalFirst} for every $Y\subset X$ there is $S\in\SSS$ such that $Y\subset S$ and $\max\{\dens Y,\omega\} = \dens S$;
    \item\label{it:upDirectedRich} if $\A\subset \SSS$ is up-directed, then $\overline{\bigcup \A} \in \SSS$.
\end{enumerate}
\noindent We say that a family $\SSS\subset \PP(X\times Y)$ is \emph{rectangular}, if for every $S\in\SSS$ there are $A\subset X$ and $B\subset Y$ such that $S = A\times B$.
\end{defin}

\noindent Our first main result is then the following.

\begin{thmx}\label{thm:main1}Let $X$ be a (nonseparable) Banach space. Then there exists a rectangular exceedingly rich family $\SSS$ of closed subspaces of $X\times X^*$ such for every $V\times W\in \SSS$ we have that $V$ is almost isometric ideal in $X$ and there is a contractive projection $P:X^*\to X^*$ with $\ker P = V^\perp$ and $\rng P \supset W$.
\end{thmx}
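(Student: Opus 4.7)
The plan is to use the method of suitable models (elementary submodels of $(H(\theta),\in)$ for a sufficiently large regular cardinal $\theta$). Fix $\theta$ so that $X$, $X^*$, the norm, the duality pairing, and auxiliary combinatorial data all lie in $H(\theta)$, and call an elementary submodel $M\prec H(\theta)$ containing this data \emph{suitable}. For every suitable $M$ define
$$V_M:=\overline{X\cap M},\qquad W_M:=\overline{X^*\cap M},$$
and set $\SSS:=\{V_M\times W_M : M \text{ suitable}\}$. The family is rectangular by construction. Property \ref{it:cofinalFirst} follows from the downward L\"owenheim--Skolem theorem: given $Y\subset X$, enclose $Y$ in a suitable $M$ with $|M|=\max\{\dens Y,\omega\}$, so that $\dens V_M=|M|=\max\{\dens Y,\omega\}$. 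Property \ref{it:upDirectedRich} follows from the Tarski--Vaught chain lemma, since the union of an up-directed family of elementary submodels is elementary, and closures commute with the relevant unions.

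The nontrivial part is to verify, for every suitable $M$ with $V:=V_M$ and $W:=W_M$, that $V$ is an almost isometric ideal and that an appropriate contractive projection exists. For the ideal property, given a finite-dimensional $E\subset X$ and $\varepsilon>0$, I would first use the Principle of Local Reflexivity (or a direct compactness argument on the finite-dimensional ball) to replace $E$ by a finite-dimensional subspace $E'$ spanned by vectors of $X\cap M$ that $(1+\varepsilon)$-approximates $E$ and agrees with $E$ on $E\cap V$. The statement ``there exists a $(1+\varepsilon)$-isomorphic embedding $T\colon E'\to X$ fixing $E'\cap V$'' is a first-order assertion about finite-dimensional data already in $M$ (witnessed trivially in $H(\theta)$ by $T=\Id$), so by elementarity it has a witness $T\in M$ taking values in $X\cap M\subseteq V$; composing with the approximation gives the required map on $E$.

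For the projection, since $V$ is at least an ideal, a contractive projection $P\colon X^*\to X^*$ with $\ker P=V^\perp$ exists by the classical characterization; its range consists of norm-preserving Hahn--Banach extensions of functionals on $V$ selected by some choice. The point to check is that every $x^*\in X^*\cap M$ is itself a norm-preserving extension of $x^*|_V$, which is immediate by elementarity: the supremum of $|x^*|$ over $B_X$ is attained as a supremum over $B_X\cap M\subseteq V$, so $\|x^*|_V\|=\|x^*\|$. One then chooses $P$ compatibly with $M$---either by building $P$ inside $M$ via $M$'s internal choice function, or by invoking an abstract compatibility lemma saying that the selector can be made to fix $X^*\cap M$---to conclude $P(x^*)=x^*$ for all $x^*\in X^*\cap M$, hence $\rng P\supseteq W$.

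The main obstacle is precisely this last step: the projection $P$ is not canonically defined, so ensuring that $P$ simultaneously fixes every element of $X^*\cap M$ requires a careful coordinated choice of norm-preserving extensions across the whole of $V^*$. I expect this to be handled either by an elementarity-based selection argument (performing the construction of $P$ entirely within $M$ so that its defining data is in $M$, forcing $P$ to fix the ``internal'' functionals), or via an approximation scheme using finite-dimensional sections that is absolute between $M$ and $H(\theta)$. The remaining verifications---rectangularity, density, and stability under directed unions---are essentially bookkeeping once the suitable-model framework is in place.
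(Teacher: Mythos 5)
Your framework (suitable models, $V_M=\overline{X\cap M}$, $W_M=\overline{X^*\cap M}$) is one of the three formulations that the paper itself proves equivalent in Section~\ref{sec:formulations}, so the skeleton is viable; but three steps are genuinely incomplete. First, condition \ref{it:upDirectedRich} does \emph{not} follow from the Tarski--Vaught chain lemma alone: an up-directed family $\A\subset\SSS$ is a family of \emph{closed subspaces}, and $V_M\times W_M\subset V_N\times W_N$ does not imply $M\subset N$, so the underlying models need not form a directed system and the union of the subspaces need not be of the form $V_{M'}\times W_{M'}$. The paper's proof of Theorem~\ref{thm:mainComparision}\eqref{it:modelsImplyRich} is devoted precisely to this point: one must pass to a parameterized subfamily of models $\psi(A\cup S)$, $A\subset\kappa$, and use a homeomorphism with $\ell_2(\kappa)$ to show that inclusion of the derived closed subspaces forces inclusion of the models. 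Second, your elementarity argument for the ai-ideal property is vacuous as stated: if $E'$ is spanned by vectors of $X\cap M$ then $E'\subset V$, so $E'\cap V=E'$ and an embedding fixing $E'\cap V$ is just the identity on $E'$, which says nothing about the original $E$ (whose interesting vectors lie outside $V$). The actual work is to transfer the finite-dimensional data of $E$ into the model via $\varepsilon$-nets, prove a quantitative perturbation lemma (the paper's Lemma~\ref{lem:calculations}), and then correct the resulting map so that it fixes $E\cap V$ \emph{exactly} rather than approximately --- the paper does this in Theorem~\ref{Thm:ExeedRichFamilyai} by replacing $T$ with $S=P+T(\Id-P)$, where $P$ is a projection of $E$ onto $E\cap V$ of norm at most $\dim E$. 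None of this is covered by ``composing with the approximation''.

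Third --- and this is the step you yourself flag as the obstacle --- the contractive projection with $\rng P\supset W$ does not come from the ideal property together with the (correct) observation that each $x^*\in X^*\cap M$ is a norm-preserving extension of $x^*|_V$; that observation gives no mechanism forcing any particular Hahn--Banach selector to fix $X^*\cap M$. The paper's resolution is to build the coordination into the local embeddings from the start: the Key Lemma~\ref{lem:keyLemma} produces $T\colon E\to V$ which, besides fixing a finite set $B$ and being a $(1+\varepsilon)$-embedding, satisfies $\|(d\circ T-d)|_E\|\le\varepsilon\|d\|$ for finitely many prescribed $d$ from the model. An ultrafilter limit over the directed set of data $(E,B,D,\varepsilon)$ then yields a norm-one operator $R\colon X\to V^{**}$ with $Rx=x$ on $V$ and $Rx(x^*|_V)=x^*(x)$ for $x^*\in W$ (Theorem~\ref{thm:richAiIdeals}), and $Px^*(x):=Rx(x^*|_V)$ is the required projection with $\ker P=V^\perp$ and $\rng P\supset W$ (Corollary~\ref{cor:projectionAiIdeal}); the two exceedingly rich families are then intersected via Lemma~\ref{lem:combineRich}. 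Without the extra intertwining condition on the local embeddings, the proposal as written does not close.
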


\noindent We note that a family satisfying condition \ref{it:cofinalFirst} was more-or-less found in \cite{Abr15} (see Theorem 1.5 and Remark 2.3 therein). The additional feature is condition \ref{it:upDirectedRich} which enables us to combine several results of this kind together due to the following simple observation.

\begin{lemma}\label{lem:combineRich}Let $X$ be (nonseparable) topological space and let $(\SSS_i)_{i\in\Nat}$ be a countable family of exceedingly rich families. Then $\bigcap_{i\in\Nat} \SSS_i$ is exceedingly rich family as well.
\end{lemma}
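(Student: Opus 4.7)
The plan is to verify properties \ref{it:cofinalFirst} and \ref{it:upDirectedRich} for $\SSS := \bigcap_{i\in\Nat} \SSS_i$ separately. Property \ref{it:upDirectedRich} is immediate: any up-directed subfamily $\A \subset \SSS$ lies in every $\SSS_i$, hence $\overline{\bigcup \A} \in \SSS_i$ for each $i$, and therefore in $\SSS$. The nontrivial part is \ref{it:cofinalFirst}, which I would establish by an interlacing (back-and-forth) construction.

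Fix $Y \subset X$ and an enumeration $(i_n)_{n\in\Nat}$ of $\Nat$ in which every value is attained infinitely often. I then construct recursively an increasing chain $S_0 \subset S_1 \subset \cdots$ of closed subspaces of $X$: set $S_0 := \overline Y$, and given $S_n$ apply \ref{it:cofinalFirst} for $\SSS_{i_n}$ to pick some $S_{n+1} \in \SSS_{i_n}$ with $S_n \subset S_{n+1}$ and $\dens S_{n+1} = \max\{\dens S_n,\omega\}$. A straightforward induction gives $\dens S_n \leq \max\{\dens Y,\omega\}$ for every $n$, so $S := \overline{\bigcup_n S_n}$ satisfies $\dens S \leq \max\{\dens Y,\omega\}$; the reverse inequality follows from $Y \subset S$ together with monotonicity of density under inclusion in the relevant ambient spaces (which is automatic in the Banach and metric space settings targeted by Theorem~\ref{thm:main1}).

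The key step, and the only place where \ref{it:upDirectedRich} is really used, is to verify that $S \in \SSS_i$ for every $i \in \Nat$. Fix such an $i$ and let $(n_k)_k$ be the strictly increasing enumeration of $\{n \setsep i_n = i\}$. By construction each $S_{n_k+1}$ lies in $\SSS_i$, and the family $\{S_{n_k+1} \setsep k \in \Nat\}$ is totally ordered by inclusion, hence up-directed. Since $(n_k)_k$ is cofinal in $\Nat$, every $S_n$ is contained in some $S_{n_k+1}$, so $\overline{\bigcup_k S_{n_k+1}} = \overline{\bigcup_n S_n} = S$. Applying \ref{it:upDirectedRich} for $\SSS_i$ then yields $S \in \SSS_i$, which finishes the verification. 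The main obstacle is precisely this last reassembly of $S$ as a closure of an up-directed chain drawn entirely from $\SSS_i$, and the standard interlacing enumeration resolves it without further work.
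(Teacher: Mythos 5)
Your proof is correct and follows essentially the same route as the paper: both arguments build a single increasing chain of subspaces that visits each $\SSS_i$ cofinally (the paper via a triangular array $Y_i^n$, you via an enumeration of $\Nat$ hitting each index infinitely often) and then invoke \ref{it:upDirectedRich} for each $\SSS_i$ separately to identify the common closure as a member of every family. Your explicit remark about monotonicity of density is a minor point the paper glosses over, but otherwise the two proofs coincide.
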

\begin{proof}It is obvious that condition \ref{it:upDirectedRich} holds for $\bigcap_{n} \SSS_n$. Let us prove \ref{it:cofinalFirst}. Pick $Z\subset X$. We inductively using find spaces $\{Y_i^n\colon i\leq n\}$ with $\dens Y_i^n\leq \dens Z$ such that $Y_i^n\in\SSS_i$ for every $i\leq n$ and
\[
Z\subset\quad Y_1^1 \subset \quad Y_2^1\subset Y_2^2\subset \quad \ldots \subset \quad Y_1^n\subset Y_2^n\subset \quad \ldots \subset Y_n^n\subset \quad Y_{1}^{n+1}\subset \ldots
\]
Then $Y_i^\infty : =\overline{\bigcup_{n=1}^\infty Y_i^n}\in \SSS_i$ and we have $Y_i^\infty = Y_j^\infty$ for every $i\neq j$. Thus, $Y_1^\infty\in \bigcap_{i\in\Nat} \SSS_i$, $Z\subset Y_1^\infty$ and $\dens Y_1^\infty = \dens Z$.
\end{proof}

\noindent The notion of exceedingly rich family is inspired by a related notion of a \emph{rich family} which is an analogue for the class of separable spaces, see \cite{C18} and references therein. We also note that existence of a rich family satisfying Theorem~\ref{thm:main1} follows from \cite[Theorem 2.4]{Band}, but when compared with our proof of Theorem~\ref{thm:main1}, the proof of \cite[Theorem 2.4]{Band} is more involved and less direct (and the result covers only separable spaces).

Very recently, Quilis and Zoca \cite{QuilisZoca} introduced a metric analogue of almost isometric ideals. We say that a subset $Y$ of a metric space $X$ is \emph{almost isometric local retract} in $X$, if for every finite set $E\subset X$ and every $\varepsilon>0$ there exists a Lipschitz mapping $T:E\to N$ such that $Tx=x$ for $x\in E\cap Y$ and $T$ is $(1+\varepsilon)$-biLipschitz embedding (that is, we have $(1+\varepsilon)^{-1}d(x,y)\leq d(Tx,Ty)\leq (1+\varepsilon)d(x,y)$ for $x,y\in E$). Using our approach, we quite easily obtain the following improvement of \cite[Theorem 5.5]{QuilisZoca}. This might be considered as the second main result of this paper.

\begin{thmx}\label{thm:main2}
Let $X$ be a metric space. Then there exists an exceedingly rich family $\SSS$ such that each $F\in\SSS$ is almost isometric local retract in $X$.
\end{thmx}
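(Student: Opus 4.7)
The plan is to take $\SSS$ to be the family of all closed almost isometric local retracts in $X$ and verify the two defining properties. Condition \ref{it:cofinalFirst} is essentially \cite[Theorem 5.5]{QuilisZoca}, which for any $Y\subset X$ furnishes an almost isometric local retract $S\supset Y$ with $\dens S=\max\{\dens Y,\omega\}$. Alternatively, one can give a short direct proof via elementary submodels: for $M\prec \HH(\theta)$ of appropriate cardinality with $X\in M$ and $Y\subset M$, the set $\overline{X\cap M}$ is an almost isometric local retract, as one sees by applying elementarity to the existential statement ``there exist points in $X$ realizing specified rational-approximate distances to a finite tuple of parameters in $M$''.

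The substantive step is \ref{it:upDirectedRich}. Let $\A\subset \SSS$ be up-directed, set $F:=\overline{\bigcup\A}$, and fix a finite $E\subset X$ together with $\varepsilon>0$. Put $d_0:=\min\{d(u,v):u,v\in E,\ u\neq v\}>0$, choose small parameters $\varepsilon',\delta>0$ to be tuned, and for each $y\in E\cap F$ pick $y'\in \bigcup\A$ with $d(y,y')<\delta$. Up-directedness applied finitely often yields a single $S_0\in \A$ containing every chosen $y'$. Let $E':=(E\setminus F)\cup\{y':y\in E\cap F\}$ and apply the almost isometric local retract property of $S_0$ with tolerance $\varepsilon'$ to obtain a $(1+\varepsilon')$-biLipschitz map $T':E'\to S_0$ fixing each $y'$. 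Define $T:E\to F$ by $T|_{E\cap F}=\Id$ and $T|_{E\setminus F}=T'|_{E\setminus F}$; then $TE\subset F$ and $T$ fixes $E\cap F$ by construction.

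It remains to verify that $T$ is $(1+\varepsilon)$-biLipschitz. On pairs inside $E\cap F$, $T$ is the identity; on pairs inside $E\setminus F$, $T$ inherits the $(1+\varepsilon')$-biLipschitz bound from $T'$. The only nontrivial estimate is for a mixed pair $(y,x)$ with $y\in E\cap F$ and $x\in E\setminus F$: from $d(Ty,Tx)=d(y,T'x)$, the triangle inequality, and $(1+\varepsilon')$-biLipschitzness of $T'$ on $\{y',x\}$, one obtains $|d(Ty,Tx)-d(y,x)|\leq (2+\varepsilon')\delta+\varepsilon' d(y,x)$, together with a matching lower bound. Since $d(y,x)\geq d_0$, choosing $\varepsilon'$ and $\delta$ sufficiently small in terms of $\varepsilon$ and $d_0$ absorbs the additive $O(\delta)$ error into a multiplicative $\varepsilon$-bound. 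The main obstacle, and the essential role of finiteness of $E$, lies precisely in this mixed-pair estimate: fixing $y$ (as required) rather than the approximator $y'$ (as $T'$ does) costs an additive error of order $\delta$, which can be dominated by a multiplicative $\varepsilon\cdot d(y,x)\geq \varepsilon d_0$ only because the finiteness of $E$ gives the uniform positive lower bound $d_0>0$ on pairwise distances.
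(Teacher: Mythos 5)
Your proposal is essentially correct, but it takes a genuinely different route from the paper. You take $\SSS$ to be the maximal candidate family (all closed almost isometric local retracts), import condition \ref{it:cofinalFirst} from \cite[Theorem 5.5]{QuilisZoca}, and prove condition \ref{it:upDirectedRich} directly by a perturbation argument; your mixed-pair estimate is the right one, and your observation that the additive $O(\delta)$ error can only be absorbed because finiteness of $E$ gives the lower bound $d_0>0$ on pairwise distances is exactly the crux (the same device appears in the paper's proof of Theorem~\ref{Thm:ExeedRichFamilyMetric} when replacing the map fixing approximators $\varphi(x)$ by one fixing $E\cap V$ itself). The paper instead constructs an explicit rectangular Skolem-like function on $\PP(X\times\Lip_0(X))$ (Lemma~\ref{lem:keyLemmaMetric}), using countable sets of approximate distance-realizations inside the Lipschitz-free space and the quantitative Lemma~\ref{lem:calculations}, and then converts largeness in the sense of Skolem-like functions into an exceedingly rich family via Theorem~\ref{thm:main3}. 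What your approach buys is brevity and elementarity -- no Lipschitz-free spaces, no suitable models, and the largest possible family, hence at least as useful for intersections via Lemma~\ref{lem:combineRich}. What the paper's approach buys is self-containedness (it reproves, rather than cites, the Quilis--Zoca existence result, which is the content of \ref{it:cofinalFirst}), plus the extra control over finitely many functionals in $\Lip_0(X)$ (condition \ref{list:keyLemma3Metric}), which is needed for the projection/extension-operator consequences and has no counterpart in the maximal-family argument. Two minor caveats: you should note that the closure of an ai-local retract is again one (your own up-directed argument applied to a singleton family gives this, and it is needed since \cite{QuilisZoca} may not produce closed sets), and your elementary-submodel alternative for \ref{it:cofinalFirst} is only a sketch -- making it precise requires essentially the quantitative step you already carry out for mixed pairs, so it is believable but not yet a proof.
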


An important feature of our proofs, which might be of use in many similar situations is, that we do not actually construct exceedingly rich family, but rather family large in the sense of Skolem-like functions.

\begin{defin}\label{def:SkolemLikeFucntion}Given an infinite set $I$ we say that a function $\phi:\PP(I) \to \PP(I)$ is \emph{Skolem-like} provided that:
\begin{enumerate}[label=\textnormal{(S-\alph*)}]
    \item\label{it:skolemNotIncrease} $A\subset \phi(A)$ and $|A| = |\phi(A)|$ for every $A\subset I$;
    \item\label{it:skolemMonotone} $\phi$ is \emph{monotone}, that is, if $A\subset B$ then $\phi(A) \subset \phi(B)$ for $A,B\in\PP(I)$;
    \item\label{it:skolemUpDirected} given an up-directed family $\A \subset \PP(I)$, we have $\phi(\bigcup \A) = \bigcup_{A\in \A} \phi(A)$.
\end{enumerate}
Given a (nonseparable) topological space $X$ we say that a family of its closed subspaces $\SSS$ is \emph{large in the sense of Skolem-like functions} if there exists a Skolem-like function $\phi:\PP(X) \to \PP(X)$ such that
\[
\SSS = \{\overline{\phi(C)}\colon C\subset X\}.
\]
\end{defin}

The main ingredient of our proofs is the following which enables us to pass from one notion to another. Experience shows that it is easier to construct families large in the sense Skolem-like functions, so this seems to be quite a helpul tool and might be considered as the third main result of this paper.

\begin{thmx}\label{thm:main3}
Let $X$ be a topological space and let $\SSS$ be a family of its subspaces.
\begin{itemize}
    \item If $\SSS$ is exceedingly rich, then there exists $\SSS_0\subset \SSS$ large in the sense of Skolem-like functions.
    \item If $X$ is metrizable and $\SSS$ is large in the sense of Skolem-like functions, then there exists $\SSS_0\subset \SSS$ which is exceedingly rich.
\end{itemize}
\end{thmx}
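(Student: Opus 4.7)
For the first bullet, given exceedingly rich $\SSS$, the plan is to manufacture a countable family of witness functions from the cofinality data. By induction on $|E|$, for each finite $E \subset X$ use \ref{it:cofinalFirst} to select a separable $S_E \in \SSS$ containing $E \cup \bigcup_{F \subsetneq E} S_F$; this is possible since a finite union of separable sets is separable. Pick a countable dense $D_E \subset S_E$ containing $E \cup \bigcup_{F \subsetneq E} D_F$ and enumerate $D_E = \{d_E^n : n \in \Nat\}$. Defining $f_n(x_1, \ldots, x_k) := d_{\{x_1, \ldots, x_k\}}^n$ yields a countable family of functions $X^{<\omega} \to X$, and I let $\phi(A)$ be the smallest superset of $A$ closed under all $f_n$. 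Standard L\"owenheim--Skolem bookkeeping gives \ref{it:skolemNotIncrease}--\ref{it:skolemUpDirected}. The point is $\overline{\phi(A)} \in \SSS$: since $F \subset E \Rightarrow S_F \subset S_E$ by construction, the family $\{S_E : E \subset \phi(A) \text{ finite}\}$ is up-directed in $\SSS$, and using that $D_E$ is dense in $S_E$,
\[
\overline{\phi(A)} \;=\; \overline{\bigcup\nolimits_E D_E} \;=\; \overline{\bigcup\nolimits_E S_E}
\]
belongs to $\SSS$ by \ref{it:upDirectedRich}. Setting $\SSS_0 := \{\overline{\phi(C)} : C \subset X\}$ works.

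For the second bullet, with $X$ metrizable and $\SSS = \{\overline{\phi(C)} : C \subset X\}$ for some Skolem-like $\phi$, define
\[
\SSS_0 := \{\overline{C} : C = \phi(C)\text{ and }|C| = \dens \overline{C}\},
\]
closures of minimum-cardinality $\phi$-fixed-points; then $\SSS_0 \subset \SSS$ since $\overline{C} = \overline{\phi(C)}$. To verify \ref{it:cofinalFirst}, given $Y \subset X$ pick a dense $D \subset Y$ of cardinality $\max\{\dens Y, \omega\}$ and set $C := \bigcup_n \phi^n(D)$. Applying \ref{it:skolemUpDirected} to the chain $(\phi^n(D))_n$ gives $C = \phi(C)$, \ref{it:skolemNotIncrease} gives $|C| \leq \max\{\dens Y, \omega\}$, and metrizability --- specifically hereditary density under subsets --- yields $\dens Y \leq \dens \overline{C} \leq |C|$, forcing $|C| = \dens \overline{C} = \max\{\dens Y, \omega\}$ together with $Y \subset \overline{D} \subset \overline{C}$.

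The main obstacle is \ref{it:upDirectedRich}. Let $\A = \{\overline{C_i}\}_{i \in I} \subset \SSS_0$ be up-directed with $\kappa := \dens \overline{\bigcup \A}$; a standard cofinality argument reduces to $|I| \leq \kappa$. The naive candidate $C := \bigcup_i C_i$ has $\overline{C} = \overline{\bigcup \A}$ and $|C| \leq \kappa$, but need not satisfy $C = \phi(C)$: although $\{\overline{C_i}\}$ is up-directed, $\{C_i\}$ may fail to be up-directed as sets, so \ref{it:skolemUpDirected} does not apply. The plan is to replace each $C_i$ by a compatible larger fixed point $\widetilde{C_i} \subset \overline{C_i}$ along a cofinal well-ordered chain of $\A$, so that $\overline{\widetilde{C_i}} = \overline{C_i}$, $|\widetilde{C_i}| \leq \kappa$, and $\widetilde{C_i} \subset \widetilde{C_j}$ whenever $\overline{C_i} \subset \overline{C_j}$. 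Metrizability enters essentially: when $\overline{C_i} \subset \overline{C_j}$, elements of $C_i$ lie in $\overline{C_j}$ and are approximable by sequences from $C_j$, enabling the inductive enlargement. Once $\{\widetilde{C_i}\}$ is up-directed as sets, $\bigcup_i \widetilde{C_i}$ is a $\phi$-fixed-point by \ref{it:skolemUpDirected}, has closure $\overline{\bigcup \A}$, and cardinality $\kappa$, certifying $\overline{\bigcup \A} \in \SSS_0$. The chief expected technical difficulty is the transfinite bookkeeping --- preserving the fixed-point property and controlling cardinality at each stage --- handled via sequential-density absorption at successors and \ref{it:skolemUpDirected} at limits.
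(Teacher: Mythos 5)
Your first bullet is fine and is essentially the paper's own argument in a light disguise: the paper also picks, for each separable configuration, a separable member of $\SSS$ containing it and a countable dense subset, and closes off so that $\overline{\phi(A)}$ becomes the closure of the union of an up-directed subfamily of separable members of $\SSS$, to which \ref{it:upDirectedRich} applies. Your indexing by finite subsets of $X$ with $S_F\subset S_E$ for $F\subset E$ replaces the paper's pairwise joins $F_{G,H}$, but the mechanism is identical.

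The second bullet has a genuine gap, and it is exactly at the step you flag as the ``main obstacle.'' Your verification of \ref{it:cofinalFirst} is correct (density is monotone under subspaces in metrizable spaces), but the proposed repair of \ref{it:upDirectedRich} does not go through. First, an up-directed family need not contain a cofinal well-ordered chain (e.g.\ $[\omega_1]^{<\omega}$ under inclusion), and \ref{it:upDirectedRich} is stated for arbitrary up-directed families, so the reduction to a chain is already illegitimate. More fundamentally, the inductive enlargement cannot be carried out: a Skolem-like $\phi$ is a purely combinatorial closure operator with no continuity whatsoever, so there is no reason that a $\phi$-fixed point containing a prescribed subset of $\overline{C_j}$ exists \emph{inside} $\overline{C_j}$ --- iterating $\phi$ on $\widetilde{C_i}\cup C_{j}$ can escape $\overline{C_j}$ entirely, since $\widetilde{C_i}\not\subset C_j$ in general and $\phi$ applied to limit points of $C_j$ is unrelated to $\phi$ applied to the approximating sequences. ``Sequential-density absorption'' therefore has nothing to act on. This coherence problem (forcing $\overline{C_i}\subset\overline{C_j}$ to imply containment of the underlying fixed points) is precisely what the paper's proof is engineered to solve, and it does so by \emph{not} taking all minimal fixed points: it passes to elementary submodels $M=\psi(A\cup S)$ indexed by sets $A\subset\kappa$, and uses Toru\'nczyk's theorem together with the Lipschitz-free space $\F(X)$ to coordinatize $X$ by $\ell_2(\kappa)$ so that the closed set $X_M$ determines $M\cap\kappa$, which in turn determines $M$ monotonically. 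That coordinatization is where metrizability is really used, and it is the missing ingredient in your sketch; without it (or a substitute), it is not even clear that your family $\SSS_0$ of all minimal fixed points satisfies \ref{it:upDirectedRich} at all.
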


\noindent Moreover, we also show that in metrizable topological spaces, the two methods mentioned above are equivalent also to the method of suitable models, we refer the reader to Section~\ref{sec:formulations} for more details. Our approach also gives some improvement of the main result from \cite{C18}, where a similar result was obtained only in the setting of spaces homeomorphic to Banach spaces and their separable subspaces, see Remark~\ref{rem:comparisionSeparable} for more details.

Finally, in the last part of this paper we show some applications of Theorem~\ref{thm:main1} and Theorem~\ref{thm:main2}, which are related to separable reduction (that is, a method of a proof where we extend validity of a result known to hold for separable spaces to the nonseparable setting not knowing the proof of the result in the separable case, we refer reader to \cite{C18} for some more details). A sample of those applications are the following two results.

\begin{thmx}\label{thm:main4}Let $X$ be a non-separable Banach space. Then there exists an exceedingly rich family $\SSS$ of subspaces of $X$ such that, for every $Y\in \SSS$ the following holds:
\[\begin{split}
X \text{ is a Gurari\u{\i} space}  & \Leftrightarrow Y \text{ is a Gurari\u{\i} space},\\
X \text{ is an }L_1\text{-predual space}  & \Leftrightarrow Y \text{ is an } L_1\text{-predual space},\\
X \text{ has Daugavet property }  & \Leftrightarrow Y \text{ has Daugavet property}.
\end{split}\]
(We emphasize that one family $\SSS$ is responsible for all the three equivalences.)
\end{thmx}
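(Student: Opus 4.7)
The plan is to combine Theorem~\ref{thm:main1} and Lemma~\ref{lem:combineRich} with a separable-reduction-style analysis of each of the three properties. First, I would apply Theorem~\ref{thm:main1} to produce an exceedingly rich family $\SSS_0$ of closed subspaces of $X$ whose members are almost isometric ideals (and, via the rectangular structure, come equipped with contractive dual projections). The ``downward'' direction ``$X$ has $P \Rightarrow Y$ has $P$'', for $P \in \{\text{Gurari\u{\i}},\, L_1\text{-predual},\, \text{Daugavet}\}$, is then automatic: for Daugavet this is the Abrahamsen--Lima--Nygaard inheritance result recalled in the introduction, while for Gurari\u{\i} and $L_1$-predual it follows from the finite-dimensional local characterizations of these properties combined with the $(1+\varepsilon)$-fixing embeddings provided by Theorem~\ref{thm:main1}.

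For the reverse ``upward'' direction ``$Y$ has $P \Rightarrow X$ has $P$'' I would handle each property independently using separable determination. Failure of each $P$ is witnessed by countable data: for Gurari\u{\i} and $L_1$-predual by a pair of finite-dimensional subspaces with an almost-isometric embedding that cannot be almost-extended; for Daugavet by a unit vector $x$, a slice $S$ of $B_X$, and an $\varepsilon > 0$ for which no $y \in S$ satisfies $\|x+y\| > 2-\varepsilon$. Encoding the selection of such witnesses as a closure operation (and, for Daugavet, incorporating functionals via the rectangular structure of Theorem~\ref{thm:main1}) produces a Skolem-like function $\phi_P \colon \PP(X) \to \PP(X)$ satisfying \ref{it:skolemNotIncrease}--\ref{it:skolemUpDirected} such that whenever $X$ fails $P$, also $\overline{\phi_P(C)}$ fails $P$ for every $C \subset X$. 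The second part of Theorem~\ref{thm:main3} then yields an exceedingly rich family $\SSS_P$ each of whose members fails $P$ whenever $X$ does.

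To conclude, by Lemma~\ref{lem:combineRich} the intersection $\SSS := \SSS_0 \cap \SSS_{\text{Gur}} \cap \SSS_{L_1} \cap \SSS_{\text{Dgv}}$ is exceedingly rich, and by construction each $Y \in \SSS$ satisfies both directions of all three equivalences simultaneously -- the forward direction because $Y$ is an ai-ideal, and the reverse direction because $Y$ already contains the required witnesses of any failure in $X$.

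I expect the main obstacle to be the construction of $\phi_{\text{Dgv}}$. The slice-based characterization supplies the individual witness tuples, but weaving them into a function satisfying monotonicity \ref{it:skolemMonotone} and preservation under up-directed unions \ref{it:skolemUpDirected} requires that witness selection depend canonically on the input set; the standard workaround is to fix a well-ordering of $X$ (and of a countable encoding of admissible data) and always pick the minimal witness, after which \ref{it:skolemUpDirected} follows because any witness found in $\bigcup \A$ already lies in some $A \in \A$. The routine verification that $\phi_P$ does not enlarge density (property \ref{it:skolemNotIncrease}) is straightforward once the witness alphabet is shown to be countable per input.
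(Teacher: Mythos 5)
Your overall strategy is the paper's: the downward implications come from an exceedingly rich family of ai-ideals plus inheritance (Lemma~\ref{lem:inheritGurarii} for the Gurari\u{\i} and $L_1$-predual cases, \cite[Proposition 3.8]{Abr14} for Daugavet), the upward implications come from families whose members contain a witness of failure, and everything is merged with Lemma~\ref{lem:combineRich}. Where you diverge is in the upward direction, and there you have made the problem harder than it is. Since the goal is a biconditional, one may split into cases on whether $X$ has the property $P$; in the case where $X$ fails $P$, the failure is witnessed by a \emph{single, fixed} countable configuration (for Gurari\u{\i}/$L_1$-predual a pair $E\subset F$ of finite-dimensional spaces and an isometry $T:E\to X$; for Daugavet a point $y\in S_X$, a functional $x^*\in S_{X^*}$, an $\varepsilon>0$, and a countable set $D\subset B_X$ with $\sup_{x\in D}|x^*(x)|=1$ so that $x^*$ still has norm one on any subspace containing $D$). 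The family of \emph{all} closed subspaces containing that fixed countable set is trivially exceedingly rich, and every member fails $P$. So no Skolem-like witness-collecting function, no well-ordering, no canonical minimal witness, and no appeal to the second half of Theorem~\ref{thm:main3} is needed; the obstacle you single out as the main difficulty ($\phi_{\mathrm{Dgv}}$) dissolves. This is exactly how Theorems~\ref{thm:gurarii} and~\ref{thm:daugavet} proceed. Two small points worth fixing if you write this up: (i) Lemma~\ref{lem:combineRich} intersects families of subspaces of a single space, so you should use the family of ai-ideals in $X$ itself (Theorem~\ref{Thm:ExeedRichFamilyai}) rather than the rectangular family in $X\times X^*$ from Theorem~\ref{thm:main1}, or else lift everything to $X\times X^*$ consistently; (ii) your alternative idea of securing $\|x^*|_Y\|=1$ via the rectangular structure does work --- if $x^*\in W\subset\rng P$ with $P$ contractive and $\ker P=V^\perp$, a Hahn--Banach extension argument gives $\|x^*|_V\|=\|x^*\|$ --- but the explicit countable norming set $D$ is the cheaper route.
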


\begin{thmx}\label{thm:main5}Let $X$ be a non-separable metric space. Then there exists an exceedingly rich family $\SSS$ of subspaces of $X$ such that, for every $Y\in \SSS$ the following holds:
\[
X \text{ is an absolute ai-local retract} \Leftrightarrow Y\text{ absolute an ai-local retract}.
\]
\end{thmx}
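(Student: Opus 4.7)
The plan is to do a separable reduction by combining Theorem~\ref{thm:main2} with a suitably chosen Skolem-like function and applying Theorem~\ref{thm:main3} and Lemma~\ref{lem:combineRich}. The first step is a finitary reformulation: $X$ is an absolute ai-local retract if and only if it satisfies $(\star)$: for every finite $F\subset X$, every finite metric space $E$ isometrically containing~$F$, and every $\varepsilon>0$, there is a $(1+\varepsilon)$-biLipschitz map $T\colon E\to X$ with $T|_F=\Id_F$. The nontrivial direction uses the amalgamated pushout $X\sqcup_F E$ as a metric superspace of~$X$ intersecting~$X$ exactly in~$F$.

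Next I would define a Skolem-like function $\phi\colon\PP(X)\to\PP(X)$ recording witnesses of~$(\star)$: for each finite $F\subset A$, each finite metric space $E$ isometrically containing~$F$ with rational distances (of which there are countably many up to isometry), and each positive rational~$\varepsilon$, let $\phi(A)$ contain the image $T(E)$ of a chosen witness of $(\star)$ for $(F,E,\varepsilon)$ in~$X$ whenever one exists. Axioms \ref{it:skolemNotIncrease}--\ref{it:skolemUpDirected} are routine: there are at most $\max(|A|,\omega)$ such queries each contributing finitely many points, monotonicity is immediate, and up-directed unions are preserved because every query involves only finitely many points of~$A$. Theorem~\ref{thm:main3} converts this into an exceedingly rich family $\SSS_1$, and Lemma~\ref{lem:combineRich} intersects it with the family $\SSS_2$ from Theorem~\ref{thm:main2} into an exceedingly rich family $\SSS$ whose members are simultaneously ai-local retracts in~$X$ and closed under~$\phi$.

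To verify the equivalence for $Y\in\SSS$: the forward implication ``$X$ absolute ai-lr $\Rightarrow Y$ absolute ai-lr'' follows from the Skolem-like construction (witnesses of~$(\star)$ for rational data already lie in $Y$ by design) together with a density/perturbation argument over arbitrary $E$ and~$\varepsilon$. The reverse implication is the crux: given $(F,E,\varepsilon)$ for~$X$, one uses the ai-local retract property of~$Y$ in~$X$ to produce $R\colon F\to Y$ fixing $F\cap Y$ with distortion $1+\delta$, builds the auxiliary finite metric space $\widetilde E:=(E\setminus F)\sqcup R(F)$ with distances on $E\setminus F$ rescaled by $1+\delta$ to preserve the triangle inequality, applies~$(\star)$ for~$Y$ to~$\widetilde E$, and transports the resulting map back to~$X$ by forcing the identity on~$F$. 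The main obstacle here is enforcing $T|_F=\Id_F$ \emph{exactly}: the displacement $d(Rf,f)$ is controlled only by~$\operatorname{diam}(F)\cdot\delta$ via an anchor point of~$Y$ near~$F$ (which one arranges through~$\phi$), and by choosing $\delta$ sufficiently small the accumulated multiplicative-plus-additive error can be absorbed into~$\varepsilon$.
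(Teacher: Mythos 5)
Your reverse implication has a genuine gap, and it stems from trying to prove both directions for a single, uniformly constructed family. In the step where you apply the ai-local-retract property of $Y$ in $X$ to get $R\colon F\to Y$ and then ``transport back by forcing the identity on $F$'', the displacement $d(Rf,f)$ is \emph{not} controlled by $\operatorname{diam}(F)\cdot\delta$: a $(1+\delta)$-biLipschitz map constrains mutual distances of image points, not displacements (already on $\Rea$ the isometry $x\mapsto -x$ fixes the anchor $0$ yet displaces $x$ by $2|x|$), and worse, if $F\cap Y=\emptyset$ there need not be any point of $Y$ near $F$ at all, so $d(Rf,f)$ can be of the order of $d(F,Y)$, which is unrelated to the scales occurring in $E$. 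Consequently the map you build need not be close to the identity on $F$ and the accumulated error cannot be absorbed into $\varepsilon$. Indeed, for the family you construct the reverse implication is not even clearly true: nothing in your Skolem-like function forces a witness of the \emph{failure} of $(\star)$ for $X$ into every $Y$, and ``$Y$ is an ai-local retract in $X$ and an absolute ai-local retract'' does not by itself imply that $X$ is an absolute ai-local retract.

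The paper avoids all of this by letting $\SSS$ depend on whether $X$ has the property. If $X$ is an absolute ai-local retract, it takes the family of ai-local retracts from Theorem~\ref{Thm:ExeedRichFamilyMetric} and observes (as in Lemma~\ref{lem:inheritGurarii}) that the property is inherited by ai-local retracts, so both sides of the equivalence are true; this also makes your witness-recording Skolem function, and the delicate enumeration of finite metric spaces with prescribed (possibly irrational) distances on $F$, unnecessary. If $X$ is \emph{not} an absolute ai-local retract, the characterization from \cite[Theorem 4.5]{QuilisZoca} provides a finite witness of failure inside $X$, and the family of all closed subspaces containing that finite witness is exceedingly rich with both sides of the equivalence false. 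You should restructure your proof as this case distinction; the direct two-implication argument for one fixed family is the wrong approach here.
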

\noindent Many other properties could be easily added to the lists above using our results and the above is just an incomplete sample witnessing applicability of our results, for more details we refer the interested reader to Remark~\ref{rem:moreApplications}.

The structure of the paper is the following. In Section~\ref{sec:prelim} we collect our notation and some preliminaries concerning Lipschitz-free spaces, which is a tool used in our proofs. Section~\ref{sec:formulations} is devoted mainly to the proof Theorem~\ref{thm:main3} comparing the methods mentioned in Theorem~\ref{thm:main3} moreover with the method involving suitable models (proofs in Section~\ref{sec:formulations} are using the set-theoretical method of suitable models, this method is not used anywhere else in the paper). The main outcome of Section~\ref{sec:aiIdeals} is the proof of Theorem~\ref{thm:main1}, the main outcome of Section~\ref{sec:aiLocRetr} is the proof of Theorem~\ref{thm:main2} and in Section~\ref{sec:appl} we find some applications of our results, in particular implying Theorem~\ref{thm:main4} and Theorem~\ref{thm:main5}.

\section{Preliminaries}\label{sec:prelim}

In this section we set up some notation that will be used throughout the paper. We also recall results concerning Lipschitz-free spaces which will be used further in the text.

Given a set $X$ and cardinal $\kappa$, we denote by $[X]^{\leq \kappa}$ the family of subsets of $X$ of cardinality at most $\kappa$ and by $[X]^{< \kappa}$ the family of subsets of $X$ of cardinality strictly less than $\kappa$. Cardinality of a set $X$ is denoted by $|X|$, family of its subset is denoted by $\PP(X)$. Given a Banach space $X$, $A\subset X$ and $B\subset X^*$ we put $A^\perp:=\{x^*\in X^*\colon x^*|_A\equiv 0\}$ and $B_\perp:=\{x\in X\colon x(b)=0\text{ for every }b\in B\}$. Further, we say $B\subset X^*$ is \emph{norming} if $X\setminus\{0\} \ni x\mapsto \sup_{x^*\in B\setminus\{0\}} \tfrac{|x^*(x)|}{\|x^*\|}$ defines an equivalent norm on $X$. We shall use also the following.

\begin{defin}Let $X$ and $Y$ be topological spaces. We say that a Skolem-like function $\phi:\PP(X\times Y)\to \PP(X\times Y)$ is \emph{rectangular} if for every $A\subset X\times Y$ there are $B\subset X$ and $C\subset Y$ such that $\phi(A) = B\times C$. In this case we denote by $\phi_X:\PP(X\times Y)\to \PP(X)$ and $\phi_Y:\PP(X\times Y)\to \PP(Y)$ the mappings satisfying that $\phi(A) = \phi_X(A)\times \phi_Y(A)$ for every $A\subset X\times Y$.

Note that of course if $\phi:\PP(X\times Y)\to \PP(X\times Y)$ is rectangular Skolem-like function then the corresponding large family $\SSS:=\{\overline{\phi(A)}\colon A\subset X\times Y\}$ is rectangular.
\end{defin}

Finally, let us give some preliminaries concerning Lipschitz-free spaces. Given a pointed metric space $(M,d,0)$, there exists a unique (up to isometry) Banach space $\F(M)$ (called the \emph{Lipschitz-free space over $M$}) such that there is an isometry $\delta:M\to \F(M)$ satisfying that $\delta(M\setminus\{0\})$ is linearly independent set with $\closedSpan \delta(M) = \F(M)$ and for every Banach space $X$ and a Lipschitz map $f:M\to X$ with $f(0)=0$ there exists $\widehat{f}:\F(M)\to X$ with $\widehat{f}\circ \delta = f$ and $\|\widehat{f}\| = \Lip(f)$, where $\Lip(f) = \sup\{\frac{\|f(x)-f(y)\|}{d(x,y)}\colon x\neq y\in M\}$. We note that in particular we have $\F(M)^* = \Lip_0(M)$, where $\Lip_0(M)$ is the vector space $\{f:M\to \Rea\colon f\text{ is Lipschitz and }f(0)=0\}$ endowed with the norm given by $\|f\|:=\Lip(f)$. We refer the interested reader e.g. to \cite{CDW} to some more details concerning the construction of Lipschitz-free spaces and their basic properties.

\section{Exceedingly rich families and equivalent methods}\label{sec:formulations}

The main purpose of this section is to compare three notions of largeness of a family of subspaces $\SSS\subset\PP(X)$ and prove those notions are in a sense equivalent. We warmly recommend the interested reader to the Introduction in \cite{C18}, where the motivation for such a result is further explained. The main outcome here is the proof of Theorem~\ref{thm:main3} and the proof of Lemma~\ref{lem:skeleton}, which are the only results needed in further sections.

Even thought it is possible to give a direct proof of Theorem~\ref{thm:main3}, the way how we found it was through its connection to the method of suitable models, so we prefer here to provide the reader with this insight as well. We refer the interested reader to \cite[Section 2]{C18}, where a proper introduction to the method of suitable models is given. Here, we just collect all the necessary notions needed to formulate and prove our result.

 Let $N$ be a fixed set and $\varphi$ a formula in the basic language of the set theory. By the relativization of $\varphi$ to $N$ we understand the formula $\varphi^N$ which is
obtained from $\varphi$ by replacing each chain of the form ``$\forall x$'' by ``$\forall x\in N$'' and each chain of the form ``$\exists x$'' by ``$\exists x\in N$''. Let $\varphi(x_1,\ldots,x_n)$ be a formula with all free variables shown, that is, a formula
whose free variables are exactly $x_1,\ldots,x_n$. We say $\varphi$ is absolute for $N$ if
\[
\forall a_1,\ldots,a_n\in N: \big(\varphi(a_1,\ldots,a_n) \Leftrightarrow \varphi^N(a_1,\ldots,a_n)\big).
\]
\begin{defin}
Let $\Phi$ be a finite list of formulas and $S$ be any countable set. Let $M\supset S$ be a set such that each $\varphi$ from $\Phi$ is absolute for $M$. Then we say that $M$ is a \emph{suitable model for $\Phi$ containing $S$} and we write $M \prec (\Phi; S)$.\\
(\textbf{Warning}: in \cite{C18} only countable models were considered, while here we consider suitable models which are not necessarily countable.)
\end{defin}
The method of suitable models is based mainly on the well-known theorem (see \cite[Chapter IV,
Theorem 7.]{kunenBook}) that for any finite list $\Phi$ of formulas and any set $S$ there exists $M\prec (\Phi;S)$ with $|M|\leq \max\{\omega,|S|\}$. We note that in our paper we will use the method of suitable models only through results proved elsewhere - so we do not require the reader to be too much familiar with it.

\begin{defin}
Given a (nonseparable) topological space $X$ we say that a family of its closed subspaces $\SSS$ is \emph{exceedingly large in the sense of suitable models} provided that there exists a finite list of formulas $\Phi$ and a countable set $C$ such that
\[
\SSS = \{\overline{X\cap M}\colon M\prec(\Phi;C)\}.
\]
Given $M\prec (\Phi;C)$ we put $X_M:=\overline{X\cap M}$.
\end{defin}

The main outcome of this Section is the following, which easily implies Theorem~\ref{thm:main3}.

\begin{thm}\label{thm:mainComparision}Let $X$ be a topological space and let $\SSS\subset \PP(X)$.
\begin{enumerate}
    \item\label{it:richImpliesMonotone} If $\SSS$ is exceedingly rich family, then there exists a family $\SSS_0\subset \SSS$ large in the sense of Skolem-like functions.
    \item\label{it:SkolemImpliesModels} If $\SSS$ is large in the sense of Skolem-like functions, then there exists a family $\SSS_0\subset \SSS$ exceedingly large in the sense of suitable models.
    \item\label{it:modelsImplyRich} If $X$ is metrizable and $\SSS$ is exceedingly large in the sense of suitable models, then there exists an exceedingly rich family $\SSS_0\subset \SSS$.
\end{enumerate}
\end{thm}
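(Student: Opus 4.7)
I will prove the three implications separately. Parts (1) and (2) hold for arbitrary topological $X$, while (3) crucially uses metrizability; together they yield Theorem~\ref{thm:main3}.

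For (1), the idea is to build a Skolem hull $\phi$ whose values' closures land in $\SSS$. Well-order $[[X]^{<\omega}]^{<\omega}$ (the set of finite collections of finite subsets of $X$) so that subsets precede supersets, e.g.\ by ordering first on $|T|$. By transfinite induction, pick for each $T = \{F_1,\ldots,F_k\}$ a separable $\sigma(T) \in \SSS$ containing $F_1 \cup \cdots \cup F_k$ as well as $\sigma(T')$ for every $T' \subsetneq T$; this is possible via \ref{it:cofinalFirst}, since the union in question is a finite union of separable sets. Fix also a countable dense $\delta(T) \subset \sigma(T)$. The construction yields the key monotonicity property $T_1 \subset T_2 \Rightarrow \sigma(T_1) \subset \sigma(T_2)$. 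Now let $\phi(A)$ be the Skolem hull of $A$ under the finite-arity operations $(F_1,\ldots,F_k) \mapsto \delta(\{F_1,\ldots,F_k\})$; axioms \ref{it:skolemNotIncrease}--\ref{it:skolemUpDirected} then follow by standard arguments for Skolem hulls. To show $\overline{\phi(A)} \in \SSS$, consider $\F := \{\sigma(T) \setsep T \in [[\phi(A)]^{<\omega}]^{<\omega}\}$: this lies in $\SSS$, is contained in $\overline{\phi(A)}$ (since $\delta(T) \subset \phi(A)$ whenever $T \subset [\phi(A)]^{<\omega}$, so $\sigma(T) = \overline{\delta(T)} \subset \overline{\phi(A)}$), is up-directed by the monotonicity of $\sigma$ (take $T_3 := T_1 \cup T_2$), and its union contains $\phi(A)$ hence is dense in $\overline{\phi(A)}$; applying \ref{it:upDirectedRich} then gives $\overline{\phi(A)} = \overline{\bigcup \F} \in \SSS$.

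For (2), take $\Phi$ to be a finite list of formulas encoding the defining clauses of $\phi$, the topological closure on $X$, and the enumeration of countable sets by $\omega$; let $C := \{X,\phi\}$. For any $M \prec (\Phi;C)$ with $\omega \subset M$ and any finite $F \subset X \cap M$, absoluteness gives $\phi(F) \in M$ and, via the enumeration, $\phi(F) \subset M$. By \ref{it:skolemUpDirected} this forces $\phi(X \cap M) \subset X \cap M$, so $\overline{X \cap M} = \overline{\phi(X \cap M)} \in \SSS$, and $\SSS_0 := \{\overline{X \cap M} \setsep M \prec (\Phi;C)\} \subset \SSS$ is exceedingly large in the sense of suitable models. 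For (3), condition \ref{it:cofinalFirst} follows from the L\"owenheim--Skolem theorem: given $Y \subset X$ and a dense $D \subset Y$ with $|D| = \max\{\dens Y,\omega\}$, take $M \prec (\Phi;C)$ containing $D$ with $|M| = \max\{\dens Y,\omega\}$; metrizability gives $\dens \overline{X \cap M} \leq |M|$ and $\dens Y \leq \dens \overline{X \cap M}$, as $Y \subset \overline{D} \subset \overline{X \cap M}$. Condition \ref{it:upDirectedRich} is a Tarski--Vaught-type argument: any up-directed family in $\SSS$ can be re-indexed (via L\"owenheim--Skolem) by an up-directed family of witnessing models, whose union remains absolute for the finite list $\Phi$.

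The main obstacle is the monotone selector construction in (1); the rest is largely bookkeeping, with (2) and (3) being essentially standard suitable-models arguments once the right parameters and formulas are fixed.
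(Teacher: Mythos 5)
Your parts \eqref{it:richImpliesMonotone} and \eqref{it:SkolemImpliesModels} are essentially correct. In \eqref{it:richImpliesMonotone} you organize the construction a little differently from the paper --- a single monotone selector $\sigma$ on finite collections of finite sets, built by recursion on $|T|$, instead of the paper's point-selectors $F_x$ together with a binary join $F_{G,H}$ iterated $\omega$ times --- but both yield the same kind of object: a hull $\phi$ such that $\overline{\phi(A)}$ is the closed union of an up-directed subfamily of separable members of $\SSS$, so that \ref{it:upDirectedRich} applies. Your \eqref{it:SkolemImpliesModels} is the paper's argument: $\phi(F)\in M$ and hence $\phi(F)\subset M$ for finite $F\subset X\cap M$, and then \ref{it:skolemUpDirected} gives $\phi(X\cap M)=X\cap M$.

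The genuine gap is in \eqref{it:modelsImplyRich}, in the verification of \ref{it:upDirectedRich}. You assert that ``any up-directed family in $\SSS$ can be re-indexed by an up-directed family of witnessing models'', but this is exactly the hard point of the theorem and it fails for the family of \emph{all} suitable models: the map $M\mapsto\overline{X\cap M}$ is badly non-injective, and $\overline{X\cap M}\subset\overline{X\cap N}$ gives no inclusion between $M$ and $N$, nor any way to replace $M$ by a model $M'\subset N$ with $\overline{X\cap M'}=\overline{X\cap M}$. (The Tarski--Vaught part --- that an up-directed union of suitable models is again suitable --- is fine; it is the re-indexing that is unjustified.) The paper resolves this by passing to a proper, carefully parametrized subfamily of models: it embeds $X$ isometrically into $\F(X)$, uses Toru\'nczyk's theorem to fix a homeomorphism $h:\F(X)\to\ell_2(\kappa)$, and restricts to models of the form $M=\psi(A\cup S)$ with $A\subset\kappa$, where $\psi$ is a monotone Skolem function with the extra property that $\psi(A\cup S)\subset\psi(B\cup S)$ iff $\psi(A\cup S)\cap\kappa\subset\psi(B\cup S)\cap\kappa$. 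For these models one gets $X_M\subset X_N\Rightarrow\closedSpan\{e_i\colon i\in\kappa\cap M\}\subset\closedSpan\{e_i\colon i\in\kappa\cap N\}\Rightarrow\kappa\cap M\subset\kappa\cap N\Rightarrow M\subset N$, which is precisely the re-indexing you need; this is where metrizability is actually used, via $\F(X)$, and it is the ingredient missing from your sketch. Note also that once $\SSS_0$ is a proper subfamily, your L\"owenheim--Skolem verification of \ref{it:cofinalFirst} must be redone for $\SSS_0$ itself (the paper does this by choosing $A\subset\kappa$ with $h[\delta[D]]\subset\closedSpan\{e_i\colon i\in A\}$ and $|A|=|D|$).
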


\noindent Before giving the proof of Theorem~\ref{thm:mainComparision} (see Subsection~\ref{subsec:prfComparision}), let us mention one of its consequences which we shall use later. It gives a connection of rectangular families with the notion of a projectional skeleton, see Lemma~\ref{lem:skeleton}. This notion was found in \cite{KubisSkeleton} and further studied by many authors in many papers, see e.g. \cite{CCS, Kal20}, where we refer the interested reader for more details. We start with an easy observation.

\begin{fact}\label{fact:productModels}Let $X$ and $Y$ be topological spaces. Then there is a finite list of formulas $\Phi$ and a countable set $S$ such that whenever $M\prec (\phi;S)$, then $(X\times Y)_M = X_M\times Y_M$.
\end{fact}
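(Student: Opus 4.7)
The plan is to choose $S$ to contain the elements $X$, $Y$, and the product $X\times Y$, together with (say) the Kuratowski pairing operation, and to choose $\Phi$ as a small finite list of formulas that, via absoluteness, force any suitable model $M\prec(\Phi;S)$ to be closed under formation and decomposition of ordered pairs. Concretely, one wants to include the formula $\exists z\,(z=(x,y))$ (so that $a,b\in M$ implies $(a,b)\in M$), the formula expressing that $z$ is an ordered pair and extracting its components (so that $(a,b)\in M$ implies $a,b\in M$), and the formula $z\in A\times B$ (so that membership in $X\times Y$ is read correctly inside $M$). These are standard tools in the method of suitable models and their choice is essentially parallel to analogous closure formulas used in \cite{C18}.

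With these closure properties of $M$ in hand, I would prove the equality $(X\times Y)_M = X_M\times Y_M$ by two mutual inclusions. For $\subseteq$: every $(a,b)\in (X\times Y)\cap M$ has, by absoluteness of the projection formulas, components $a,b\in M$; since also $a\in X$ and $b\in Y$ (again readable inside $M$), we obtain $a\in X\cap M$ and $b\in Y\cap M$, so $(X\times Y)\cap M\subseteq (X\cap M)\times(Y\cap M)\subseteq X_M\times Y_M$. Taking closure on the left and using that $X_M\times Y_M$ is closed in the product topology yields $(X\times Y)_M\subseteq X_M\times Y_M$.

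For $\supseteq$: given $a\in X\cap M$ and $b\in Y\cap M$, absoluteness of the pairing formula gives $(a,b)\in M$, and clearly $(a,b)\in X\times Y$, so $(a,b)\in (X\times Y)\cap M\subseteq (X\times Y)_M$. Hence $(X\cap M)\times(Y\cap M)\subseteq (X\times Y)_M$. Taking closure and invoking the elementary topological fact $\overline{A\times B}=\overline{A}\times\overline{B}$ in the product topology finishes the inclusion.

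The only real obstacle is the bookkeeping step of writing down the exact finite list of formulas whose absoluteness yields the needed pair-closure of $M$; everything else is elementary. Since this kind of list is constructed in \cite{C18} (and can, in principle, be read off from \cite{kunenBook}), the argument should consist of a brief reference to assemble $\Phi$, followed by the short two-inclusion topological computation above.
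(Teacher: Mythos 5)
Your proposal is correct and follows essentially the same route as the paper: the paper simply cites \cite[Lemma 7]{CCS0} for a finite list of formulas guaranteeing $(x,y)\in (X\times Y)\cap M$ iff $x\in X\cap M$ and $y\in Y\cap M$, and then concludes exactly as you do by taking closures. Your explicit assembly of the pairing/projection formulas and the use of $\overline{A\times B}=\overline{A}\times\overline{B}$ just spell out what the paper leaves as ``the fact easily follows.''
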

\begin{proof}By \cite[Lemma 7]{CCS0}, there is a finite list of formulas $\Phi$ and a countable set $S$ such that whenever $M\prec (\phi;S)$, then $(x,y)\in (X\times Y)\cap M$ if and only if $(x,y)\in (X\cap M)\times (Y\cap M)$. From this the fact easily follows.
\end{proof}

\begin{lemma}\label{lem:skeleton}Let $X$ be a Banach space and $D\subset X^*$ a norming closed subspace. Then the following conditions are equivalent.
\begin{enumerate}[label=\textnormal{(\roman*)}]
    \item\label{it:skeletonInduced} $D$ is subset of a set induced by a projectional skeleton in $X$.
    \item\label{it:richInduced} There exists a rectangular exceedingly rich family $\SSS\subset \PP(X\times D)$ such that for every $V\times W$ we have $X = \overline{V + W_\perp}$.
    \item\label{it:richInducedBest} There exists a rectangular exceedingly rich family $\SSS\subset \PP(X\times D)$ such that for every $V\times W$ there exists a projection $P:X\to X$ satisfying $\rng P = V$ and $\ker P = W_\perp$.
\end{enumerate}
\end{lemma}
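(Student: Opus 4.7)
The implication (iii) $\Rightarrow$ (ii) is immediate: if $P$ is a projection with $\rng P = V$ and $\ker P = W_\perp$, then $X = V \oplus W_\perp$, so in particular $X = \overline{V+W_\perp}$.

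For (ii) $\Rightarrow$ (iii), the plan is to intersect $\SSS$ with an auxiliary exceedingly rich family that enforces a $1$-norming property. Using that $D$ is norming, define a rectangular Skolem-like function $\phi$ on $\PP(X\times D)$ that to each $(A,B)$ returns the closure (under countable operations) of $(A,B')$, where $B'$ augments $B$ with a countable family of unit functionals from $D$ witnessing the norm on a countable dense subset of $\Span A$. By Theorem~\ref{thm:main3}, the associated family $\SSS_1$ is rectangular exceedingly rich, and every $V \times W \in \SSS_1$ has $W$ being $1$-norming on $V$. By Lemma~\ref{lem:combineRich}, $\SSS_0 := \SSS \cap \SSS_1$ is still exceedingly rich. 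For any $V \times W \in \SSS_0$, the $1$-norming property gives $V \cap W_\perp = \{0\}$, so $V + W_\perp$ is an algebraic direct sum; moreover the norming inequality $\|v\| = \sup_{w \in W,\, \|w\| \leq 1} |w(v+k)| \leq \|v+k\|$ (valid since $w$ annihilates $k \in W_\perp$) shows that $P(v+k) := v$ is a contraction on $V + W_\perp$. By density of $V+W_\perp$ in $X$, $P$ extends to a norm-one projection on $X$ with $\rng P = V$ and $\ker P = W_\perp$, establishing (iii).

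For (iii) $\Rightarrow$ (i), let $\SSS^{\mathrm{s}} \subset \SSS$ be the subfamily of pairs with $V$ separable. Property \ref{it:cofinalFirst} applied to singletons shows $\SSS^{\mathrm{s}}$ is cofinal among separable pieces of $X$ and $D$; property \ref{it:upDirectedRich} combined with the fact that countable unions of separable sets are separable shows $\SSS^{\mathrm{s}}$ is closed under countable up-directed unions, and is thus a $\sigma$-complete up-directed poset under inclusion. The assignment $(V \times W) \mapsto P_{V \times W}$ provided by (iii) is then a projectional skeleton: when $V_1 \times W_1 \subset V_2 \times W_2$, the inclusions $\rng P_1 = V_1 \subset V_2 = \rng P_2$ and $\ker P_2 = (W_2)_\perp \subset (W_1)_\perp = \ker P_1$ force $P_1 = P_1 P_2 = P_2 P_1$; the $\sigma$-continuity follows from \ref{it:upDirectedRich}; and the induced set $\bigcup \rng P^* = \bigcup (W_\perp)^\perp$ contains every $W$, hence every $d \in D$ by \ref{it:cofinalFirst} applied to $\{(0,d)\}$.

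Finally, for (i) $\Rightarrow$ (iii) the plan is to invoke the suitable-models framework. The projectional skeleton together with its induced norming subspace is absolutely describable by a finite list of formulas $\Phi$ and a countable parameter set $C$; for any $M \prec (\Phi; C)$, reflection combined with $\sigma$-completeness of the skeleton produces a canonical projection $P_M$ on $X$ with $\rng P_M = \overline{X\cap M}$ and $\ker P_M = (\overline{D\cap M})_\perp$. Setting $V_M := \overline{X\cap M}$ and $W_M := \overline{D\cap M}$, Fact~\ref{fact:productModels} guarantees rectangularity of the family $\{V_M \times W_M\}$, and Theorem~\ref{thm:mainComparision}(\ref{it:modelsImplyRich}) extracts from it an exceedingly rich subfamily witnessing (iii). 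The main obstacle is the identification $\ker P_M = (\overline{D \cap M})_\perp$, which requires $\overline{D \cap M}$ to be $1$-norming on $\overline{X \cap M}$; this is arranged by including norming-witness selection functions among the absolute formulas in $\Phi$, and relies on the $\sigma$-completeness of the skeleton being reflected inside $M$.
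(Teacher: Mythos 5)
Your proposal is correct in substance and covers all the implications, but the interesting part is where it diverges from the paper. The paper proves the cycle \ref{it:skeletonInduced}$\Rightarrow$\ref{it:richInduced}$\Rightarrow$\ref{it:richInducedBest}$\Rightarrow$\ref{it:skeletonInduced} and routes both of the first two implications entirely through the method of suitable models: it cites \cite[Lemma 15 and Proposition 16]{CCS} to get, for suitable $M$, the decomposition $X=X_M+(D\cap M)_\perp$ and the canonical projection with range $X_M$ and kernel $(D\cap M)_\perp$, and then uses Theorem~\ref{thm:mainComparision} plus Fact~\ref{fact:productModels} to translate back and forth. Your \ref{it:richInduced}$\Rightarrow$\ref{it:richInducedBest} instead unpacks that black box: you build an auxiliary rectangular Skolem-like family forcing $W$ to norm $V$, intersect via Lemma~\ref{lem:combineRich}, and construct $P$ by hand on the dense algebraic sum $V\oplus W_\perp$. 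This is a legitimate and more self-contained argument (it is essentially the content of the cited lemma), at the cost of redoing a construction the paper outsources; the paper's route buys brevity and uniformity, since the same model-theoretic lemma also powers \ref{it:skeletonInduced}$\Rightarrow$\ref{it:richInduced}. Two small corrections: since $D$ is only assumed norming (not $1$-norming), your extension argument yields $\|P\|\leq r$ for the norming constant $r$ rather than a norm-one projection --- harmless here, as \ref{it:richInducedBest} asks only for a projection, but you should not claim norm one; and your \ref{it:skeletonInduced}$\Rightarrow$\ref{it:richInducedBest} is really the paper's argument restated in looser language (``reflection produces a canonical projection $P_M$'' is exactly \cite[Lemma 15]{CCS}, which deserves an explicit citation rather than a promise that the formulas can be arranged). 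Your \ref{it:richInducedBest}$\Rightarrow$\ref{it:skeletonInduced} matches the paper's, with the compatibility relations $P_1=P_1P_2=P_2P_1$ correctly derived from the range and kernel inclusions.
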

\begin{proof}First, assume \ref{it:skeletonInduced} holds. By \cite[Lemma 15 and Proposition 16]{CCS}, there is a finite list of formulas $\Phi$ and a countable set $S$ such that whenever $M\prec (\phi;S)$, then $X = X_M + (D\cap M)_\perp$. Using Fact~\ref{fact:productModels}, we obtain a rectangular exceedingly large  family $\SSS\subset\PP(X\times D)$ in the sense of suitable models such that whenever $V\times W\in \SSS$, then $V + W_\perp = X$. Using Theorem~\ref{thm:mainComparision}, we obtain \ref{it:richInduced}.

Assume \ref{it:richInduced} holds. By Theorem~\ref{thm:mainComparision} together with Fact~\ref{fact:productModels} and \cite[Lemma 15]{CCS}, there is a finite list of formulas $\Phi$ and a countable set $S$ such that whenever $M\prec (\phi;S)$, then $(X\times D)_M = X_M\times D_M$ and there exists a projection $P:X\to X$ with $\rng P = X_M$ and $\ker P = (D\cap M)_\perp$. Thus, there is a a rectangular exceedingly large  family $\SSS\subset\PP(X\times D)$ in the sense of suitable models such that for every $V\times W\in \SSS$ there exists a projection $P:X\to X$ satisfying $\rng P = V$ and $\ker P = W_\perp$. By Theorem~\ref{thm:mainComparision}, this implies \ref{it:richInducedBest}.

Finally, assuming \ref{it:richInducedBest} holds, we easily check that for $\Gamma = \{(V,W)\in \SSS\colon V\times W\text{ is separable}\}$ ordered by inclusion, the projections $(P_{V\times W})_{V\times W\in\Gamma}$ satisfying $\rng P_{V\times W} = V$ and $\ker P_{V\times W} = W_\perp$ form a projectional skeleton (see e.g. \cite[Definition 1]{CCS}) and the set induced by the skeleton is in this case equal to $\bigcup_{V\times W\in \Gamma} (P_{V\times W})^*[X^*] = \bigcup_{V\times W\in \Gamma} (\ker P_{V\times W})^\perp
 = \bigcup_{V\times W\in \Gamma} \overline{W}^{w^*}\supset D$. Thus, \ref{it:skeletonInduced} holds.
\end{proof}

\subsection{Proof of Theorem~\ref{thm:mainComparision}}\label{subsec:prfComparision} Our proof of Theorem~\ref{thm:mainComparision} \eqref{it:richImpliesMonotone} is similar to the proof of \cite[Theorem 19]{C18}.

\begin{proof}[Proof of Theorem~\ref{thm:mainComparision} \eqref{it:richImpliesMonotone}]For every $x\in X$, let us pick separable $F_x \in\SSS$ with $x\in F_x$ and for every separable $F\in\SSS$, let us pick a
countable set $D_F \subset F$ which is dense in $F$. Moreover, for every separable $G, H\in\SSS$, pick separable $F_{G,H}\in\SSS$ with
$F_{G,H}\supset G \cup H$.  Pick an arbitrary infinite countable $Z\subset X$. Now, let us define a function $\psi:\PP(X)\to\PP(\SSS)$ by putting for every $C\subset X$
\[\begin{split}
\psi_0(C) & :=\{F_x\colon x\in C\cup Z\},\\
\psi_{k+1}(C)&:=\psi_k(C)\cup \{F_{G,H}\colon G,H\in\SSS\cap \psi_k(C),\;\text{ $G$ and $H$ are separable}\},\\
\psi(C)& :=\bigcup_{k=0}^\infty \psi_k(C).
\end{split}
\]
Finally, we define $\phi:\PP(X)\to \PP(X)$ by $\phi(C):=Z\cup C\cup \bigcup\{D_F\colon F\in \psi(C)\text{ is separable}\}$.

Let us prove that $\phi$ is indeed a Skolem-like function. Since $\psi$ is monotone, we see that $\phi$ is monotone as well. Moreover, we clearly have $C\subset \phi(C)$ and $\max\{|C|,\omega\} = |\phi(C)|$ for every $C\subset X$. Finally, pick an up-directed family $\A\subset \PP(X)$. By monotonicity we obviously have $\phi(\bigcup\A)\supset \bigcup_{A\in\A}\phi(A)$. For the other inclusion, using that $\A$ is up-directed we first easily by induction prove that $\psi_k(\bigcup \A) \subset \bigcup_{A\in \A}\psi(A)$ for any $k\in\Nat\cup\{0\}$. Thus, given $x\in \phi(\bigcup \A)$ either we have $x\in Z\cup \bigcup\A$ (and therefore we indeed have $x\in \bigcup_{A\in\A}\phi(A)$) or we pick $k\in\Nat\cup\{0\}$ such that $x\in D_F$ for some separable $F\in \psi_{k+1}(\bigcup \A) \subset \bigcup_{A\in\A}\psi(A)$ in which case we have $x\in \phi(A)$ for some $A\in\A$. This completes the proof that $\phi$ is indeed a Skolem-like function.

It remain to prove that $\SSS_0:=\{\overline{\phi(C)}\colon C\subset X\}$ satisfies that $\SSS_0\subset \SSS$. Pick $C\subset X$. First, we \emph{claim} that
\begin{equation}\label{eq:XMisGivenByRich}
\overline{\phi(C)} = \overline{\bigcup\{F\colon F\in\psi(C), F\text{ is separable}}\}.
\end{equation}
Indeed, it follows from the construction that given $x\in \phi(C)$ there is separable $F\in\psi(C)$ such that $x\in F$, which proves inclusion ``$\subset$'' in \eqref{eq:XMisGivenByRich}. Conversely, given separable $F\in\psi(C)$ we have $D_F\subset \phi(C)$ and so $F = \overline{D_F}\subset \overline{\phi(C)}$, which proves inclusion ``$\supset$'' in \eqref{eq:XMisGivenByRich}. This proves the claim and so \eqref{eq:XMisGivenByRich} holds. Now, we observe that by the definition of $\psi$ we have that $\A:=\{F\colon F\in \psi(C)\text{ is separable}\}$ is up-directed subset of $\SSS$ and therefore
\[
\overline{\phi(C)} \stackrel{\eqref{eq:XMisGivenByRich}}{=} \overline{\bigcup \A}\in\SSS.
\]
Since $C\subset X$ was arbitrary, this shows that $\SSS_0\subset \SSS$.
\end{proof}

Proof of Theorem~\ref{thm:mainComparision} \eqref{it:SkolemImpliesModels} is similar to the proof of \cite[Proposition 3.1]{CK04}.

\begin{proof}[Proof of Theorem~\ref{thm:mainComparision} \eqref{it:SkolemImpliesModels}]Let $\tau$ be the topology of $X$ and $\phi:\PP(X)\to\PP(X)$ be a Skolem-like function such that $\SSS = \{\overline{\phi(A)}\colon A\subset X\}$. Let $\Phi$ be the union of finite lists of formulas from the statements of results used in the proof below (namely \cite[Lemma 7]{CCS0}). Let $S$ be the union of $\{X,\tau,\phi\}$ together with the countable sets from the statements of results used in the proof below (namely \cite[Lemma 7]{CCS0}). Let $M \prec (\Phi; S)$. In order to finish the proof it suffices to show that then $X_M = \overline{\phi(X\cap M)}$.

Note that by \cite[Lemma 7]{CCS0} we have: $A\in M$ if and only if $A\subset M$ whenever $A\subset X$ is finite; $A\in M$ implies that $A\subset M$ whenever $A\subset X$ is countable; $\phi(A)\in M$ for every $A\in M\cap \PP(X)$.

Now, we shall observe that
\begin{equation}\label{eq:xMequality}
X_M = \overline{\bigcup\{\phi(A)\colon A\subset X\cap M\text{ is finite set}\}}.
\end{equation}
Indeed, for the inclusion ``$\supset$'' we notice that given a finite set $A\subset X\cap M$ we have $A\in M$ and therefore $\phi(A)\in M$ which, since $\phi(A)$ is countable, implies $\phi(A)\subset X\cap M$. For the inclusion ``$\subset$'' we notice that given $x\in X\cap M$, we have $x\in \phi(A)$ where $A = \{x\}\subset X\cap M$. Thus, \eqref{eq:xMequality} holds.

Further, using \ref{it:skolemUpDirected} we also have
\[
\overline{\phi(X\cap M)} = \overline{\bigcup\{\phi(A)\colon A\subset X\cap M\text{ is finite set}\}}.
\]
This together with \eqref{eq:xMequality} shows that $X_M = \overline{\phi(X\cap M)}$, which is what we needed to finish the proof.
\end{proof}

Finally, we shall prove Theorem~\ref{thm:mainComparision} \eqref{it:modelsImplyRich}. The basic idea in the argument below is similar to the proof of \cite[Theorem 21]{C18}. The additional ingredient is the use of Lipschitz-free spaces, which enable us to work not only with Banach spaces, but rather with metric spaces in general. Also, we present the proof using suitable models, which is less technical approach when compared to the proof of \cite[Theorem 21]{C18}, where a direct inductive construction was used. Reader not familiar with the method of suitable models can then compare both methods (suitable models used in the proof below and inductive construction used in \cite[Theorem 21]{C18}) and convince himself that when using suitable models we really just hide the technicalities concerning a precise inductive construction in an abstract result, otherwise both methods use the same ideas (in the proof below the inductive construction is hidden in the proof of \cite[Lemma 13]{CCS} used therein).

\begin{proof}[Proof of Theorem~\ref{thm:mainComparision} \eqref{it:modelsImplyRich}]Let $\Phi'$ be a finite list of formulas and $S'$ a countable set such that $\SSS = \{X_M\colon M\prec(\Phi';S')\}$.

Let $d$ be a metric on $X$ generating its topology. By \cite{Tor81}, all Banach spaces of the same density are topologically homeomorphic. Thus, there exists a homeomorphism $h:\F(X)\to \ell_2(\kappa)$, where $\kappa = \dens X$. Let $e:\kappa \to \ell_2(\kappa)$ be one-to-one mapping such that $e[\kappa]$ is the canonical orthonormal basis of $\ell_2(\kappa)$.

Let $\Phi$ be the union of $\Phi^\prime$ and the finite lists of formulas from the statements of results used in the proof below (namely \cite[Lemma 7 and 8]{CCS0} and \cite[Lemma 12]{CCS}). Let $S$ be the union of $S'\cup\{h,\kappa,e\}\cup\{X,d\}\cup\{\F(X),\delta,\|\cdot\|,+,\cdot\}\cup\{\ell_2(\kappa),\|\cdot\|,+,\cdot\}$ together with the countable sets from the statements of results used in the proof below (namely \cite[Lemma 7 and 8]{CCS0} and \cite[Lemma 12]{CCS}). Let $M \prec (\Phi; S)$.

Note that by \cite[Lemma 7 and 8]{CCS0} we have $\delta[X]\in M$, $e[\kappa]\in M$ and by \cite[Lemma 12 (2)(ii)]{CCS} we therefore obtain 
\[\begin{split}
\overline{\F(X)\cap M} = \closedSpan\{\delta(x)\colon x\in X\cap M\},\quad \overline{\ell_2(\kappa)\cap M}= \closedSpan \{e_i\colon i\in\kappa\cap M\}
\end{split}\]
and by \cite[Lemma 12 (1)]{CCS} we have $h[\overline{\F(X)\cap M}] = \overline{\ell_2(\kappa)\cap M}$. By
\cite[Theorem IV.7.4]{kunenBook}, there exists a set $R$ such that $R \prec (\Phi; S\cup \kappa)$. Let $\psi: \PP(R) \to \PP(R)$ be the Skolem
function given by \cite[Lemma 13]{CCS}, in particular we have
\begin{enumerate}
    \item\label{it:areModels} \[\forall A\subset R:\qquad \psi(A)\prec (\Phi; S),\; |\psi(A)| \leq \max\{|A|,\omega\},\]
    \item $\psi(A)\subset \psi(B)$ whenever $A\subset B\subset R$,
    \item\label{it:upDirectedModelsSkolem} whenever $\A\subset \PP(R)$ is such that $\{\psi(A)\colon A\in\A\}$ is up-directed, then $\psi(\bigcup \A) = \bigcup_{A\in A} \psi(A)$,
    \item\label{it:subsetOnOrdinals} for every $A,B\subset \kappa$ we have $\psi(A\cup S)\subset \psi(B\cup S)$ if and only if $\psi(A\cup S)\cap \kappa\subset \psi(B\cup S)\cap \kappa$.
\end{enumerate}
Consider the following family of subsets of $R$
\begin{equation}\label{eq:defM}
\M := \{\psi(A\cup S)\colon A\subset \kappa\}.
\end{equation}
and put $\SSS_0:=\{X_M\colon M\in\M\}$. By \eqref{it:areModels} we have $\SSS_0\subset\SSS$ and so it remains to prove that $\SSS_0$ is exceedingly rich family.

Pick infinite $C\subset X$ and let $D\subset C$ be its dense subset with $|D| = \dens C$. We want to find $M\in\M$ such that $D\subset X_M$ and $\dens X_M\leq |D|$. Pick $A\subset \kappa$ satisfying $h[\delta[D]]\subset \closedSpan\{e_i\colon i\in A\}$ and $|A| = |D|$. We \emph{claim} that for $M:=\psi(S\cup A)$ we have $D\subset X_M$. Indeed, first we notice that
\[
\delta(D)\subset h^{-1}[\closedSpan\{e_i\colon M\cap\kappa\}] = \closedSpan\{\delta(x)\colon x\in X_M\}.
\]
This already implies $D\subset X_M$ as otherwise for $s\in D\setminus X_M$ we have $d(s,X_M)>0$ so we may find a Lipschitz function $f$ with $f|_{X_M}\equiv 0$ and $f(s)=1$, but then $f$ separates $\closedSpan\{\delta(x)\colon x\in X\cap M\}$ from $\delta(s)$, a contradiction. Thus, we indeed have $D\subset X_M\in\SSS_0$ and $\dens X_M\leq |\psi(S\cup A)| \leq |C|$. 

Finally, let $\A\subset \SSS_0$ be up-directed. There is $I\subset \M$ with $\A = \{X_M\colon M\in I\}$. Notice that for $M,N\in \M$ we have $M\subset N$ if and only if $X_M\subset X_N$. Indeed, one implication is trivial and for the other one we note that $X_M\subset X_N$ implies that
\[\begin{split}
\closedSpan\{e_i\colon i\in \kappa\cap M\} & = h[\closedSpan\{\delta(x)\colon x\in X_M\}]\\
& \subset h[\closedSpan\{\delta(x)\colon x\in X_N\}] = \closedSpan\{e_i\colon i\in \kappa\cap M\},
\end{split}\]
which in turn implies $\kappa\cap M\subset \kappa\cap N$ and by \eqref{it:subsetOnOrdinals} we therefore obtain $M\subset N$. Thus, since the mapping $I\ni M\mapsto X_M\in \A$ is order-isomorphism, $I$ is up-directed. For every $M\in I$ there is $A_M$ with $M = \psi(S\cup A_M)$ and we obtain
\[
\bigcup_{M\in I} M = \bigcup_{M\in I}\psi(S\cup A_M) \stackrel{\eqref{it:upDirectedModelsSkolem}}{=} \psi(S\cup \bigcup_{M\in I} A_M) =: M_\infty\in \M,
\]
which implies that
\[
\overline{\bigcup \A} = \overline{\bigcup_{M\in I} \overline{X\cap M}} = \overline{X\cap (\bigcup_{M\in I} M)} = \overline{X\cap M_{\infty}}\in \SSS_0.
\]
This finishes the proof that $\SSS_0\subset \SSS$ is exceedingly rich family.
\end{proof}

\begin{remark}\label{rem:comparisionSeparable}
We note that minor modifications in the proof of Theorem~\ref{thm:mainComparision} \eqref{it:modelsImplyRich} lead to the proof of the following, which improves \cite[Theorem 15]{C18} from spaces homeomorphic to a Banach space to metrizable topological spaces: supposing $X$ is a metrizable topological space and $\SSS\subset \PP(X)$ is large in the sense of suitable models, there exists a rich family $\SSS_0\subset \SSS$. The only modification which is needed is: in the definition of $\M$ in \eqref{eq:defM} one needs to write $A\in [\kappa]^{\leq \omega}$ instead of $A\subset \kappa$.

We refer the interested reader to \cite{C18}, where the needed notions are defined.
\end{remark}

\section{Almost Isometric Ideals}\label{sec:aiIdeals}

The main purpose of this section is to prove Theorem~\ref{Thm:ExeedRichFamilyai}, Theorem~\ref{thm:richAiIdeals} and its consequence Corollary~\ref{cor:projectionAiIdeal}, from which Theorem~\ref{thm:main1} easily follows (using Lemma~\ref{lem:combineRich}). Basic strategy of our proof is the same as \cite[proof of Theorem 1.5]{Abr15}. In order not to get lost in the inductive construction which we shall use, we start by extracting the main technicalities in Lemma~\ref{lem:calculations}.

\begin{lemma}\label{lem:calculations}
Let $X$ be a Banach space, $U,V\subset X$ finite-dimensional subspaces with $U\cap V = \{0\}$. Then for any $m\in\Nat$ there is a function $\zeta:[0,1)\to[0,1)$ continuous at zero with $\zeta(0)=0$ such that whenever $\varepsilon\in (0,1]$,
\begin{itemize}
    \item $n\in\Nat$ is such that $n\geq \tfrac{1}{\varepsilon}$,
    \item $N\subset V$ is an $\varepsilon$-net of $nB_V$,
    \item $A\subset \ell_1^{\dim U}$ is an $\varepsilon$-net of $S_{\ell_1^{\dim U}}$
    \item $\{e_1,\ldots,e_{\dim U}\}\subset mB_U$ is a basis of $U$,
    \item $\{f_1,\ldots,f_{\dim U}\}\subset mB_X$ are such that for every $x\in N$ and $a\in A$ we have
        \[
            \Big|\|x+\sum_{i=1}^{\dim U}a_i e_i\| - \|x+\sum_{i=1}^{\dim U}a_i f_i\|\Big| \leq \varepsilon,
        \]
\end{itemize}
then the mapping $T:U\oplus V\to X$ defined by 
\[
T(x+\sum_{i=1}^{\dim U}a_i e_i):=x+\sum_{i=1}^{\dim U}a_i f_i,\quad x\in V,\; a\in \Rea^{\dim U}
\]
is $(1+\zeta(\varepsilon))$-isomorphism.

Moreover, if $x^*\in X^*$ is such that $|x^*(e_i-f_i)|<\varepsilon$, then $\|(x^*\circ T - x^*)|_{U\oplus V}\| < \zeta(\varepsilon)\|x^*\|$.
\end{lemma}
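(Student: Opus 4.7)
My plan is to reduce the $(1+\zeta(\varepsilon))$-isomorphism property to a uniform multiplicative distortion estimate: for every nonzero $y\in U\oplus V$, $\bigl|\|Ty\|/\|y\|-1\bigr|\leq \zeta_0(\varepsilon)$, from which the claimed bound follows by setting $\zeta(\varepsilon):=\zeta_0(\varepsilon)/(1-\zeta_0(\varepsilon))$ (once $\zeta_0$ is small). Since $U,V$ are finite-dimensional with $U\cap V=\{0\}$, the projection $P\colon U\oplus V\to U$ along $V$ has some finite norm $K$, and by the equivalence of norms on the finite-dimensional space $U$ there is a constant $c>0$ with $c\|a\|_1 \leq \|\sum_i a_i e_i\|$ for all $a\in\Rea^{\dim U}$. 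Both constants depend only on $U$, $V$, $\{e_i\}$ and will be absorbed into $\zeta$ at the end.

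Write $y = x+\sum_i a_i e_i$ with $x\in V$. If $a=0$ then $Ty=y$ trivially, so assume $a\neq 0$. By homogeneity of the norm, the ratio $\|Ty\|/\|y\|$ is invariant under rescaling $y$, which allows me to split into two regimes according to the size of $\|x\|/\|a\|_1$ against $n$. In the \emph{net regime} $\|x\|/\|a\|_1\leq n$ I rescale so that $\|a\|_1=1$ and $\|x\|\leq n$, and pick $\tilde x\in N$, $\tilde a\in A$ within distance $\varepsilon$ of $x$ and $a$ respectively; chaining three uses of the triangle inequality through $\|\tilde x+\sum \tilde a_i e_i\|$ and $\|\tilde x+\sum \tilde a_i f_i\|$, combined with the net hypothesis on $\{f_i\}$ and $\|e_i\|,\|f_i\|\leq m$, yields
\[
\bigl|\|y\| - \|Ty\|\bigr| \leq (1+m)\varepsilon + \varepsilon + (1+m)\varepsilon = (3+2m)\varepsilon,
\]
and the lower bound $\|y\|\geq \|\sum a_i e_i\|/K \geq c/K$ turns this into a relative error at most $(3+2m)K\varepsilon/c$. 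In the \emph{tail regime} $\|x\|/\|a\|_1>n$ I rescale so $\|x\|=1$, forcing $\|a\|_1<1/n\leq\varepsilon$; then $\|\sum a_i e_i\|,\|\sum a_i f_i\|\leq m\varepsilon$, so $|\|y\|-\|Ty\||\leq 2m\varepsilon$ and $\|y\|\geq 1-m\varepsilon$, giving relative error $2m\varepsilon/(1-m\varepsilon)$. Taking $\zeta_0(\varepsilon)$ to be the maximum of the two bounds delivers the required uniform estimate.

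For the ``moreover'' clause I use that $T$ fixes $V$ pointwise and sends $e_i$ to $f_i$, so $(x^*\circ T - x^*)(y) = \sum_i a_i\,x^*(f_i-e_i)$ for $y = x+\sum a_i e_i$; consequently $|(x^*\circ T-x^*)(y)|\leq \varepsilon\|a\|_1$ under the hypothesis, and combining with the bound $\|a\|_1\leq K\|y\|/c$ established in the first paragraph yields the operator-norm estimate $\|(x^*\circ T-x^*)|_{U\oplus V}\|\leq K\varepsilon/c$, which can be written in the form $<\zeta(\varepsilon)\|x^*\|$ after an appropriate adjustment of $\zeta$. The chief technical obstacle is the tail regime: the net hypothesis is useless there because no rescaling of $y$ simultaneously forces $\|x\|\leq n$ and $\|a\|_1\geq 1$. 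The assumption $n\geq 1/\varepsilon$ is built in precisely to guarantee that, after rescaling $y$ to $\|x\|=1$, the coefficient vector has $\ell_1$-norm below $\varepsilon$, so that a trivial triangle inequality closes the argument.
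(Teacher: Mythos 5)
Your proposal follows essentially the same route as the paper's proof: the same homogeneity-based split into a net regime (with the $(3+2m)\varepsilon$ triangle-inequality chain and the lower bound $\|y\|\geq C\|a\|_1/\|P_U\|$ via the projection onto $U$) and a tail regime (where your bound $2m\varepsilon/(1-m\varepsilon)$ is literally the paper's $g(1/\varepsilon)$), and the identical computation for the ``moreover'' clause. The only wrinkle is that your final step in the ``moreover'' part yields a bound independent of $\|x^*\|$, which cannot be absorbed into $\zeta(\varepsilon)\|x^*\|$ for small $\|x^*\|$ by adjusting $\zeta$ alone --- but the paper's own proof inserts the factor $\|x^*\|$ at exactly the same point with the same lack of justification, so this is a defect of the statement rather than of your argument.
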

\begin{proof}Let us denote by $P_U$ the projection $P_U:U\oplus V\to U$. Further, since all the norms on a finite-dimensional space are equivalent, pick $C>0$ such that $\|\sum_{i=1}^{\dim U} a_ie_i\|\geq C\|a\|_1$ for any $a\in\Rea^{\dim U}$.

Pick any $a\in S_{\ell_1^{\dim U}}$ and $x\in nB_V$. Let $b\in A$ and $y\in N$ be such that $\|a-b\|_1\leq \varepsilon$ and $\|x-y\|\leq \varepsilon$. Then we have
	\begin{equation} \label{key_lemma_aux_calc_3}
		\begin{split} 
			\Bigg| \Big\| x & + \sum_{i=1}^{\dim U} a_i e_i \Big\| - \Big\| x + \sum_{i=1}^{\dim U} a_i f_i \Big\| \Bigg| \\
			& \leq \| x - y \| + \Big\| \sum_{i=1}^{\dim U} a_i e_i - \sum_{i=1}^{\dim U} b_i e_i \Big\| + \|x-y\|  + \Big\| \sum_{i=1}^{\dim U} a_i f_i - \sum_{i=1}^{\dim U} b_i f_i \Big\| \\
			& \quad + \Bigg| \Big\| y + \sum_{i=1}^{\dim U} b_i e_i \Big\| - \Big\| y + \sum_{i=1}^{\dim U} b_i f_i \Big\| \Bigg| \\
			& < \varepsilon + \varepsilon m + \varepsilon + \varepsilon m + \varepsilon = \varepsilon(3 + 2m).
		\end{split}
	\end{equation}
Moreover, for any $z\in V$ and $c\in \Rea^{\dim U}$ we have
\[
    \Big\|z + \sum_{i=1}^{\dim U}c_i e_i\Big\| \geq \frac{1}{\|P_U\|}\Big\|\sum_{i=1}^{\dim U}c_i e_i\Big\|\geq \frac{C\|c\|_1}{\|P_U\|},
\]
so from \eqref{key_lemma_aux_calc_3} we obtain
\[
    \Bigg| \Big\| x + \sum_{i=1}^{\dim U} a_i e_i \Big\| - \Big\| x + \sum_{i=1}^{\dim U} a_i f_i \Big\| \Bigg|\leq \frac{\varepsilon(3+2m)\|P_U\|}{C}\Big\| x + \sum_{i=1}^{\dim U} a_i e_i \Big\|,
\]
which implies that for $f(\varepsilon):=\min\{\frac{\varepsilon(3+2m)\|P_U\|}{C},\tfrac{1}{2}\}$ we have
\begin{equation}
    (1-f(\varepsilon))\Big\| x + \sum_{i=1}^{\dim U} a_i e_i \Big\|\leq \Big\| x + \sum_{i=1}^{\dim U} a_i f_i \Big\|\leq (1+f(\varepsilon))\Big\| x + \sum_{i=1}^{\dim U} a_i e_i \Big\|
\end{equation}
and for $h(\varepsilon) = \tfrac{f(\varepsilon)}{1-f(\varepsilon)}$ we have $(1+h(\varepsilon))^{-1} = (1-f(\varepsilon))$ and therefore
\begin{equation}\label{key_lemma_final_ineq}
    \frac{1}{1+h(\varepsilon)}\Big\| x + \sum_{i=1}^{\dim U} a_i e_i \Big\|\leq \Big\| x + \sum_{i=1}^{\dim U} a_i f_i \Big\|\leq (1+h(\varepsilon))\Big\| x + \sum_{i=1}^{\dim U} a_i e_i \Big\|
\end{equation}
whenever $x\in nB_V$ and $a\in S_{\ell_1^{\dim U}}$.

    Pick arbitrary $a \in S_{\ell_1^{\dim U}}$ and $x \in X \setminus mB_U$. We observe
	\[
	\|x\| - m \leq \| x \| - \left\| \sum_{i=1}^{\dim U} a_i e_i \right\| \leq \left\| x + \sum_{i=1}^{\dim U} a_i e_i \right\| \leq \|x\| + m.
	\]
	The same argument yields
	\[
	\|x\| -m \leq \left\| x + \sum_{i=1}^{\dim U} a_i f_i \right\| \leq \|x\| + m.
	\]
	This means
	\begin{equation} \label{key_lemma_two_ineq}
		\frac{\left\| x + \sum_{i=1}^{\dim U} a_i f_i \right\|}{\left\| x + \sum_{i=1}^{\dim U} a_i e_i \right\|} \leq \frac{\|x\| + m}{\|x\| - m} \text{ and } \frac{\left\| x + \sum_{i=1}^{\dim U} a_i e_i \right\|}{\left\| x + \sum_{i=1}^{\dim U} a_i f_i \right\|} \leq \frac{\|x\| + m}{\|x\| - m}.
	\end{equation}
	We consider the real function $g: z \mapsto \frac{z+m}{z-m} - 1$. Because $\| x \| > n\geq \tfrac{1}{\varepsilon}$ and the function $g$ is decreasing to $0$, we have $\frac{\|x\| + m}{\|x\| - m} = g(\|x\|) +1<g(\tfrac{1}{\varepsilon}) + 1$. From this and \eqref{key_lemma_two_ineq} we obtain	\begin{equation}\label{eq:keyLemmaOdhadVetsiNezGJedna}
	\frac{1}{1 + g(\tfrac{1}{\varepsilon})} \Big\| x + \sum_{i=1}^{\dim U} a_i e_i \Big\|\leq \Big\| x + \sum_{i=1}^{\dim U} a_i f_i \Big\| \leq (1 + g(\tfrac{1}{\varepsilon})) \Big\| x + \sum_{i=1}^{\dim U} a_i e_i \Big\|.
	\end{equation}

 Thus, for $\zeta(\varepsilon):=2\max\{h(\varepsilon),g(\tfrac{1}{\varepsilon}),\frac{\varepsilon}{C} \|P_U\|\}$ from \eqref{key_lemma_final_ineq} and \eqref{eq:keyLemmaOdhadVetsiNezGJedna} we obtain that 
 \[
    \frac{1}{1 + \zeta(\varepsilon)} \Big\| x + \sum_{i=1}^{\dim U} a_i e_i \Big\| < \Big\| T\Big(x + \sum_{i=1}^{\dim U} a_i e_i\Big) \Big\| < (1 + \zeta(\varepsilon)) \Big\| x + \sum_{i=1}^{\dim U} a_i e_i \Big\|
 \]
 for every $a\in S_{\ell_1^{\dim U}}$ and every $x\in V$, which easily using a homogeneous argument implies that $T$ is $(1+\zeta(\varepsilon))$-isomorphism.

 In order to prove the ``Moreover'' part, pick $x^*\in X^*$ with $|x^*(e_i-f_i)|<\varepsilon$ and $y\in B_{U\oplus V}$. Let us denote $y = x + \sum_{i=1}^{\dim U} a_i e_i$. Then we have
	\begin{align*}
		\left| x^*(Ty) - x^*(y) \right| &= \left| x^*\left(x + \sum_{i=1}^{\dim U} a_i f_i\right) - x^*\left(x + \sum_{i=1}^{\dim U} a_i e_i\right) \right| \\
		&= \left| \sum_{i=1}^{\dim U} a_i x^*(e_i - f_i) \right| < \|x^*\|\sum_{i=1}^{\dim U} |a_i| \varepsilon = \|x^*\|\varepsilon \|a\|_{\ell_1^{\dim U}} \\
		&\leq \frac{\varepsilon\|x^*\|}{C} \left\| \sum_{i=1}^{\dim U} a_i e_i \right\| = \frac{\varepsilon\|x^*\|}{C} \| P_U(y) \| \leq \frac{\varepsilon\|x^*\|}{C} \|P_U\| \|y\|.
	\end{align*}
	Thus, we have that $\|(x^*\circ T - x^*)|_{U\oplus V}\| \leq \frac{\varepsilon}{C} \|P_U\|\|x^*\| < \zeta(\varepsilon)\|x^*\|$.
\end{proof}

The inductive construction we need is contained in the following analogy to \cite[Lemma 2.1]{Abr15}.

\begin{lemma}[Key Lemma]\label{lem:keyLemma}
	Let $X$ be a Banach space. Then there exists a rectangular Skolem-like function $\phi:\PP(X\times X^*)\to \PP(X\times X^*)$ such that for every $A\subset X\times X^*$ the sets $\phi_X(A)$ and $\phi_{X^*}(A)$ are $\Rat$-linear subspaces and the following holds :
 
    Let $\varepsilon > 0$, $D \in [\phi_{X^*}(A)]^{<\omega}$, $B \in [\phi_X(A)]^{<\omega}$ and $E\subset X$ be a finite-dimensional subspace with $E \supset B$. Then there is a continuous linear mapping $T: E \to \overline{\phi_X(A)}$ such that
	\begin{enumerate}[label=\textnormal{(K\alph*)}]
		\item\label{list:keyLemma1} $Tx = x$ for all $x\in B$,
		\item\label{list:keyLemma2} $(1+\varepsilon)^{-1}\|x\| \leq \|T x \| \leq (1+\varepsilon) \|x\|$  for all $x \in E$,
		\item\label{list:keyLemma3}$\|(d\circ T - d)|_E\|\leq \varepsilon \|d\|$ for all $d\in D$,
	\end{enumerate}
\end{lemma}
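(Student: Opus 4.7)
The plan is to construct $\phi$ by a countable iterative process analogous to the one in the proof of \cite[Lemma 2.1]{Abr15}, with additional bookkeeping to enforce the rectangular Skolem-like property. For each input $A \subset X \times X^*$, I would set $\phi(A) := \bigcup_{n\in\Nat} A^{(n)}$ where $A^{(0)} := A$ and $A^{(n+1)}$ enriches $A^{(n)}$ by adjoining witnesses for a countable family of \emph{rational requests} arising from $A^{(n)}$: tuples $(B, D, k, m, \varepsilon, \bar{\tilde e})$ where $B \in [(A^{(n)})_X]^{<\omega}$, $D \in [(A^{(n)})_{X^*}]^{<\omega}$, $k, m \in \Nat$, $\varepsilon \in \Rat \cap (0,1]$, and $\bar{\tilde e} = (\tilde e_1, \ldots, \tilde e_k)$ is a $k$-tuple of $\Rat$-linear combinations of elements of $(A^{(n)})_X$ with $\|\tilde e_i\| \leq m$. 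For each such request I would fix rational-coordinate $\varepsilon$-nets in $\lceil 1/\varepsilon \rceil B_{\Span B}$ and $S_{\ell_1^k}$, invoke the Principle of Local Reflexivity to find witness tuples $\bar f \in (m B_X)^k$ and $\bar y^* \in (X^*)^k$ meeting the hypotheses of Lemma~\ref{lem:calculations} relative to $(B, D, \bar{\tilde e}, \varepsilon)$ and these nets, adjoin them rectangularly to $A^{(n+1)}$, and close under $\Rat$-linear combinations.

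I would then verify the rectangular Skolem-like axioms \ref{it:skolemNotIncrease}--\ref{it:skolemUpDirected}: monotonicity and the cardinality bound follow from the inductive description, and the up-directed union axiom holds since every stage depends only on finite subsets of the current state; rectangularity is built in by the separate treatment of $X$- and $X^*$-witnesses. For the main conclusion (K1)--(K3), given $B, D, E, \varepsilon$ as in the statement, I would pick a basis $e_1,\ldots,e_{\dim U}$ of a complement $U$ of $\Span B$ in $E$, and use a PLR-style approximation to produce $\bar{\tilde e}$ in the $\Rat$-span of $(A^{(n)})_X$ for some $n$ close enough to $\bar e$ (in the joint norm-and-$D$-dual sense) to tolerate Lemma~\ref{lem:calculations}'s norm inequalities with slack $\varepsilon/2$. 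The witnesses $\bar f \subset \overline{\phi_X(A)}$ added during the construction for the corresponding $(B,D,\bar{\tilde e},\varepsilon/2,\ldots)$-request then satisfy Lemma~\ref{lem:calculations}'s hypotheses with respect to $\bar e$, and the lemma produces the desired map $T : E \to \Span B + \Span \bar f \subset \overline{\phi_X(A)}$ enjoying (K1)--(K3).

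The main obstacle is reconciling the arbitrariness of $E$ with the countability of rational requests handled during construction; the collection of finite-dimensional subspaces of $X$ containing $\Span B$ is uncountable, so no direct enumeration suffices. My approach circumvents this via PLR-based approximation of $\bar e$ by $\bar{\tilde e}$ in $\phi_X(A)$, combined with the continuity of Lemma~\ref{lem:calculations}'s norm conditions under small perturbations of the input tuple (absorbed into a tightening of $\varepsilon$). The most delicate point is ensuring that sufficiently many dual witnesses $\bar y^*$ are adjoined during the stages to make $\phi_{X^*}(A)$ norming enough on $\phi_X(A)$ for this approximation to succeed within the constructed state rather than all of $X$; this requires a careful staging of the requests so that every finite configuration of $D$ and nets used in the final argument has been addressed at some earlier stage.
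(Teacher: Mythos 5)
There is a genuine gap at the heart of your construction. Your iterative process adjoins witnesses only for ``requests'' whose tuples $\bar{\tilde e}$ are $\Rat$-linear combinations of elements already in the current state $(A^{(n)})_X$; for such tuples the witness problem is vacuous (one may take $f_i=\tilde e_i$), so all the real work is deferred to the final step, where you must approximate an \emph{arbitrary} basis $\bar e$ of a complement $U$ of $\Span B$ in an \emph{arbitrary} finite-dimensional $E\subset X$ by a tuple $\bar{\tilde e}$ inside $\phi_X(A)$, ``in the joint norm-and-$D$-dual sense''. That approximation statement is essentially the conclusion of the Key Lemma itself (it is exactly the assertion that the configuration of $E$ relative to $\Span B$ and $D$ can be copied into $\overline{\phi_X(A)}$), and it does not follow from the Principle of Local Reflexivity: PLR relates finite-dimensional subspaces of $X^{**}$ to $X$, and it gives no way to push a finite-dimensional subspace of $X$ into a prescribed subspace $Y\subset X$ --- indeed that is precisely the ai-ideal property, which fails for general $Y$. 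So the argument is circular at its crucial point, and your closing paragraph (making $\phi_{X^*}(A)$ ``norming enough'') does not repair it.

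The missing idea, which is how the paper's proof works, is to make the witness-adjunction step quantify over \emph{all} of $(B_X)^k$ rather than over tuples from the constructed set. For fixed finite $B$, $D$, parameters $n,k,\delta$ and fixed finite $\delta$-nets $N_{n,\delta}(B)\subset nB_{\Span B}$ and $A_{k,\delta}\subset S_{\ell_1^k}$, consider the map
\[
\psi_{n,k,\delta}(u) := \Big(\big(\|x + \textstyle\sum_{i=1}^{k} a_iu_i \|\big)_{x\in N_{n,\delta}(B),\,a\in A_{k,\delta}},\ \big(d(u_i)\big)_{d\in D,\, i\leq k}\Big),\qquad u\in (B_X)^k,
\]
whose range lies in a finite-dimensional (hence separable) space. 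One can therefore choose a \emph{countable} set $F_{n,k,\delta}(B,D)\subset B_X$ such that $\psi_{n,k,\delta}\big(F_{n,k,\delta}(B,D)^k\big)$ is dense in $\psi_{n,k,\delta}\big((B_X)^k\big)$, and it is these countable sets (over all finite $B$ in the current stage, finite $D\subset \Span_\Rat\pi_{X^*}(A)$, and rational parameters) that get adjoined at each stage. Then, given an arbitrary $E\supset\Span B$ with basis $\bar e$ of a complement $U$, the data vector $\psi_{n,k,\delta}(\bar e)$ is automatically approximated by $\psi_{n,k,\delta}(\bar f)$ for some $\bar f$ in the adjoined countable set, which is exactly the hypothesis of Lemma~\ref{lem:calculations}; no approximation of $\bar e$ itself by elements of $\phi_X(A)$, and no dual witnesses $\bar y^*$, are needed. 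This single observation dissolves the uncountability obstacle you correctly identify but do not overcome.
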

\begin{proof}We start the proof by defining certain sets which we will later use to construct the desired Skolem-like function. Given sets $A\subset X\times X^*$, finite sets $B\subset X$ and $D\subset X^*$, for each $n,k \in \Nat$ and $\delta\in \Rat_+$, we pick a finite $\delta$-net $A_{k,\delta}$ of $S_{\ell_1^k}$ and a finite $\delta$-net $N_{n,\delta}(B)\subset \Span B$ of $nB_{\Span B}$. Now since the mapping $\psi_{n,k,\delta}: (B_X)^{k} \to  (\mathbb{R}^{N_{n,\delta}(B) \times A_{k,\delta}} \times \mathbb{R}^{D \times k},\|\cdot\|_\infty)$, defined by the formula
\[
		\psi_{n,k,\delta}(u) := \left(\left(\left\|x + \sum_{i=1}^{k} a_iu_i \right\|\right)_{x\in N_{n,\delta}(B),a\in A_{k,\delta}},\left(d(u_i)\right)_{d\in D, i\leq k}\right),
\]
has separable range, we may pick a countable set $F_{n,k,\delta}(B,D)\subset B_X$ such that $\psi_{n,k,\delta}(F_{n,k,\delta}(B,D)^k)$ is dense in the range of $\psi_{n,k,\delta}$.
Next, if $B(A):=\Span_\Rat \pi_X(A)\subset X$ and $D(A):=\Span_\Rat \pi_{X^*}(A)\subset X^*$ for every $A\subset X\times X^*$, we inductively define $\xi(A)\subset X$ for $A\subset X\times X^*$ as follows:
\[\begin{split}
\xi_0(A)&:=B(A),\\
\xi_{n+1}(A)&:=\Span_\Rat\Big(\xi_n(A)\cup \bigcup \Big\{F_{n,k,\delta}(B,D)\colon n,k\in\Nat,\; \delta\in\Rat_+,\\
&\qquad\qquad B\subset \xi_n(A) \text{ and } D\subset D(A) \text{ are finite sets}\Big\}\Big),\\
\xi(A)&:=\bigcup_{n=0}^\infty \xi_n(A).
\end{split}\]
Finally, we put $\phi(A):=\xi(A)\times D(A)$ for $A\subset X\times X^*$. We claim that $\phi$ is indeed a rectangular Skolem-like function. It is evident that $\phi$ satisfies conditions \ref{it:skolemNotIncrease} and \ref{it:skolemMonotone} from Definition \ref{def:SkolemLikeFucntion}. To check \ref{it:skolemUpDirected} we pick an up-directed family $\mathcal{A}\subset P(X\times X^*)$. From \ref{it:skolemMonotone} we have $\phi(\bigcup \mathcal{A})\supset \bigcup_{A\in \mathcal{A}}\phi(A)$. To check the reverse inclusion, we start by proving the following claim: for every $n<\omega$, if $x\in \xi_n(\bigcup \mathcal{A})$, then there is $A\in \mathcal{A}$ such that $x\in \xi_n(A)$.

We proceed by induction on $n$. Since the map $A\mapsto B(A)$ satisfies \ref{it:skolemUpDirected}, and $\mathcal{A}$ is up-directed, the claim is easily seen to be true for $n=0$. Assume that it holds for $n$, and let $x$ be an element in $\xi_{n+1}(\bigcup \mathcal{A})$. Then
\[x=q_0y+\sum_{j=1}^mq_jz_j\]
for some $q_0,q_1,\ldots,q_m\in\Rat$, $y\in \xi_n(\bigcup \mathcal{A})$ and $z_j\in F_{n_j,k_j,\delta_j}(B_j,D_j)$, $j\leq m$ where $n_j,k_j\in\Nat$, 
$\delta_j\in\Rat_+$, and $B_j$ and $D_j$ are finite subsets of $\xi_n(\bigcup \mathcal{A})$ and $D(\bigcup \mathcal{A})$ respectively. By the induction hypothesis, there is $A_0\in \mathcal{A}$ such that $y\in \xi_n(A_0)$. Moreover, since $\mathcal{A}$ is up-directed, by applying the induction hypothesis again, we deduce that there is $A_1\in \mathcal{A}$ such that, $A_0\subset A_1$ and $\bigcup_{j=1}^mB_j\subset \xi_n(A_1)$. Since $A\mapsto D(A)$ satisfies \ref{it:skolemUpDirected}, we may also assume that $\bigcup_{j=1}^mD_j\subset D(A_1)$. Hence $x\in \xi_{n+1}(A_1)$ and the claim is established by induction.

Now let $(x,x^*)\in \phi(\bigcup \mathcal{A})$ be arbitrary. Then $x^*\in D(\bigcup \mathcal{A})$ and there is $n<\omega$ such that 
$x\in \xi_n(\mathcal{A})$. Since the map $A\mapsto D(A)$ satisfies \ref{it:skolemUpDirected}, there is $A_0\in\A$ such that $x^*\in D(A_0)$. In accordance with our claim, there exists $A_1\in \mathcal{A}$ such that $x\in \xi_n(A_1)$. Finally, since $\mathcal{A}$ is up-directed, there is $A_2\in\mathcal{A}$ such that $A_0\cup A_1\subset A_2$ and $(x,x^*)\in \xi_n(A_1)\times D(A_0)\subset \phi(A_2)$.
This establishes that $\phi(\bigcup \mathcal{A})\subset \bigcup_{A\in \mathcal{A}}\phi(A)$. 
The fact that $\phi$ is rectangular is evident from the definition of $\phi$.

Now, pick $\varepsilon>0$, $A\subset X\times X^*$, $D\subset D(A) = \phi_{X^*}(A)$ finite, $B\subset \phi_X(A) = \xi(A)$ finite and finite-dimensional space $E\subset X$ with $E\supset B$. Since $\Span B$ is finite-dimensional, there is $U\subset E$ such that $E = \Span B\oplus U$ and we may pick a basis $\{e_1,\ldots,e_k\}\subset B_U$ of the space $U$. Let $\zeta$ be the function from Lemma~\ref{lem:calculations} applied to $\Span B\oplus U$ and let $\delta>0$ be such that $\zeta(\delta)<\varepsilon$. Now, pick $n\in \Nat$ with $n\geq \tfrac{1}{\delta}$. By the choice of $F_{n,k,\delta}(B,D)$, there are $f_1,\ldots,f_k\in F_{n,k,\delta}(B,D)\subset \xi(A)$ such that $\|\psi_{n,k,\delta}((f_i)) - \psi_{n,k,\delta}((e_i))\|_\infty$. Then, by Lemma~\ref{lem:calculations}, since $\zeta(\delta)<\varepsilon$, we have that the mapping $T:E = \Span B\oplus U\to \overline{\xi(A)} = \overline{\phi_X(A)}$ given by
 \[
 T(x+\sum_{i=1}^{k}a_i e_i):=x+\sum_{i=1}^{k}a_i f_i,\quad x\in \Span B,\; a\in \Rea^{k}
 \]
 is $(1+\varepsilon)$-isomorphic embedding and so \ref{list:keyLemma2} holds. Finally, since for any $d\in D$ we have $|d(e_i)-d(f_i)|<\delta$, $i=1,\ldots,k$, the moreover part of Lemma~\ref{lem:calculations} implies that \ref{list:keyLemma3} holds as well.
\end{proof}

The first main result of this section is now contained in the following.

\begin{thm}\label{Thm:ExeedRichFamilyai}Let $X$ be a Banach space. Then there exists an exceedingly rich family $\SSS$ of subspaces of $X$ such that each $F\in \SSS$ is ai-ideal in $X$.
\end{thm}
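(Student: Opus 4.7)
The strategy is to use Key Lemma~\ref{lem:keyLemma} to construct a family of closed subspaces of $X$ large in the sense of Skolem-like functions, verify that every member of this family is already an almost isometric ideal in $X$, and then invoke Theorem~\ref{thm:mainComparision} to pass to an exceedingly rich subfamily.

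First, let $\phi:\PP(X\times X^*)\to \PP(X\times X^*)$ be the rectangular Skolem-like function supplied by Key Lemma~\ref{lem:keyLemma} and define $\tilde\phi:\PP(X)\to\PP(X)$ by $\tilde\phi(C):=\phi_X(C\times\emptyset)$. A routine verification shows that $\tilde\phi$ is Skolem-like: monotonicity and preservation of cardinality are inherited from $\phi$, while property \ref{it:skolemUpDirected} follows from the observation that $C\mapsto C\times\emptyset$ commutes with up-directed unions. Hence $\SSS:=\{\overline{\tilde\phi(C)}\colon C\subset X\}$ is large in the sense of Skolem-like functions, and since $X$ is metrizable, the second and third parts of Theorem~\ref{thm:mainComparision} applied in succession yield an exceedingly rich family $\SSS_0\subset\SSS$.

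The bulk of the work is to check that every $F\in\SSS$ is an almost isometric ideal in $X$. Writing $F=\overline{\phi_X(A)}$ for a suitable $A\subset X\times X^*$, I would fix a finite-dimensional $E\subset X$ and $\varepsilon>0$; let $\{b_1,\dots,b_r\}$ be a basis of $E\cap F$ and $\{e_1,\dots,e_k\}$ a basis of some algebraic complement of $E\cap F$ in $E$. Using that $\phi_X(A)$ is a $\Rat$-linear subspace dense in $F$, I pick approximations $b'_i\in\phi_X(A)$ with $\|b_i-b'_i\|<\eta$ for a small parameter $\eta>0$ to be tuned. For $\eta$ small enough the vectors $b'_1,\dots,b'_r,e_1,\dots,e_k$ remain linearly independent, and Key Lemma~\ref{lem:keyLemma} applies with $B=\{b'_1,\dots,b'_r\}$, $D=\emptyset$, the space $E':=\Span\{b'_1,\dots,b'_r,e_1,\dots,e_k\}$, and a suitably small parameter $\varepsilon'$, producing $T':E'\to F$ that fixes each $b'_i$ and is a $(1+\varepsilon')$-isomorphism. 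I then define $T:E\to F$ by linearity via $T(b_i):=b_i$ and $T(e_j):=T'(e_j)$, whence $Tx=x$ on $E\cap F$ is immediate.

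The only remaining point, and the chief technicality, is the norm estimate on $T$. Comparing $T$ with $T'\circ L$, where $L:E\to E'$ is the linear isomorphism sending $b_i\mapsto b'_i$ and $e_j\mapsto e_j$, one has $Tx-T'(Lx)=\sum\alpha_i(b_i-b'_i)=x-Lx$ for $x=\sum\alpha_i b_i+\sum\beta_j e_j\in E$, both bounded in norm by $\eta\sum_i|\alpha_i|$. Since $|\alpha_i|$ is controlled in terms of $\|x\|$ by a constant depending only on the fixed basis $\{b_1,\dots,b_r\}$ of $E\cap F$ and on the chosen complement, combining this with the $(1+\varepsilon')$-isomorphism estimate for $T'$ and choosing $\eta$ and $\varepsilon'$ small enough forces $T$ to be a $(1+\varepsilon)$-isomorphic embedding. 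The main obstacle is thus the mismatch between Key Lemma~\ref{lem:keyLemma}, which only produces maps fixing a finite subset of the $\Rat$-linear skeleton $\phi_X(A)$, and the definition of an almost isometric ideal, which demands fixing the whole subspace $E\cap F\subset\overline{\phi_X(A)}$; the perturbation above is exactly what bridges this gap.
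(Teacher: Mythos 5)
Your proposal is correct and follows the same overall route as the paper: reduce via Theorem~\ref{thm:mainComparision} (equivalently Theorem~\ref{thm:main3}) to producing a family large in the sense of Skolem-like functions, take the first-coordinate projection of the function from Key Lemma~\ref{lem:keyLemma}, and then bridge the gap you correctly identify as the chief technicality --- the Key Lemma only fixes a finite subset of the dense $\Rat$-linear set $\phi_X(A)$, whereas an ai-ideal must fix all of $E\cap F$. Where you differ from the paper is in how this gap is bridged. You perturb a basis $\{b_i\}$ of $E\cap F$ into $\phi_X(A)$, apply the Key Lemma to the perturbed configuration, and then redefine the map on the original basis, controlling the error through the coordinate functionals of the fixed decomposition of $E$. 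The paper instead takes a finite net of $B_{E\cap F}$, pushes it into $\phi_X(A)$, obtains $T$ with $\|(\mathrm{Id}-T)|_{E\cap F}\|\leq\delta$, and then corrects by $S=P+T(\mathrm{Id}-P)$ where $P:E\to E$ is a projection onto $E\cap F$ with $\|P\|\leq\dim E$. Both corrections rest on a quantitative input of the same nature (your coordinate-functional constant versus the paper's $\|P\|\leq\dim E$), and both are sound; yours avoids introducing the auxiliary operator $S$, while the paper's avoids choosing a basis and arguing that the perturbed vectors stay independent. One minor slip: you define $\tilde\phi(C):=\phi_X(C\times\emptyset)$, but $C\times\emptyset=\emptyset$, so this would not depend on $C$ and would violate \ref{it:skolemNotIncrease}; you want $\phi_X(C\times\{0\})$ with $0\in X^*$, as in the paper.
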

\begin{proof}By Theorem~\ref{thm:main3}, it suffices to find a family $\SSS$ large in the sense of Skolem-like functions such that  each $F\in \SSS$ is ai-ideal in $X$.

Pick the Skolem-like function $\phi:\PP(X\times X^*)\to \PP(X\times X^*)$ from the Key Lemma~\ref{lem:keyLemma} and consider its projection onto the first coordinate given by $\psi(A):=\phi_X(A\times \{0\})$. It is easy to check that $\psi:\PP(X)\to\PP(X)$ is Skolem-like function as well. Thus, in order to finish the proof it suffices to check that $\overline{\psi(A)}$ is ai-ideal for every $A\subset X$.

Fix $A\subset X$ and put $Y:=\overline{\psi(A)}$. Given a finite dimensional subspace $E\subset X$ and $\epsilon>0$ (without loss of generality, we may assume that $\epsilon<1$), we let $\delta=\min\{\frac{\epsilon}{2(1+\dim(E))},\frac{\epsilon}{2}(\frac{1-\epsilon}{1+\epsilon})\}$ and pick a finite $\frac{\delta}{2(\delta+2)}$-net $N$ for $B_{E\cap Y}$. Since 
$\psi(A)$ is dense subset of $Y$, there exists a function $\varphi:N\to \psi(A)$ such that $\|\varphi(x)-x\|\leq \frac{\delta}{2(\delta+2)}$ for every $x \in N$. Since $\phi$ was the function from Key Lemma~\ref{lem:keyLemma}, for $E^\prime=\Span\{E\cup \varphi[N]\}$, $B=\varphi[N]$, $D=\emptyset$ and $\delta>0$ there exists a $(1+\delta)$-isomorphic embedding $T:E\to Y$ such that $Tx = x$ for $x\in B$.

Now, we shall check that then $\|(Id-T)|_{E\cap Y}\|\leq \delta$. Indeed, for $x\in B_{E\cap Y}$, there is $n\in N$ such that $\|x-n\|\leq\frac{\delta}{2(\delta+2)}$ and hence $\|x-\varphi(n)\|\leq \|x-n\|+\|\varphi(n)-n\|\leq \frac{\delta}{\delta+2}$. In other words, there exists $b \in B$ such that $\|x-b\|\leq  \frac{\delta}{\delta+2}$. Since 
$Tb=b$ we have
\[\|(\mathrm{Id}-T)x\|\leq \|x-b\|+\|Tb-Tx\|\leq (1+\|T\|)\|x-b\|\leq \delta,\]
so we really have $\|(Id-T)|_{E\cap Y}\|\leq \delta$.

Now pick a projection $P:E\to E$ with $P[E]=E\cap Y$ and norm $\|P\|\leq \dim(E)$, and define 
$S:E\to Y$ by $S=P+T(\mathrm{Id}-P)$. It is clear that 
$S(x)=x$ for every $x\in E\cap Y$. Furthermore, for every $x\in E$, the following relation holds:
\[\|Sx-Tx\| = \|(\mathrm{Id}-T)Px\|\leq \delta\|Px\|\leq \delta\dim(E)\|x\|<\frac{\epsilon}{2}\|x\|.\]
Therefore, for every $x\in E$ we have
\begin{align*}\|Sx\|&\leq \|Sx-Tx\|+\|Tx\|\leq \frac{\epsilon}{2}\|x\|+(1+\delta)\|x\|\leq (1+\epsilon)\|x\|.
\end{align*}
and
\begin{align*}
\|Sx\|&\geq \|Tx\|-\|Sx-Tx\| \geq (1-\delta)\|x\|-\frac{\epsilon}{2}\|x\|\geq \Big(1-\frac{\epsilon}{2}\cdot \frac{1-\epsilon}{1+\epsilon}-\frac{\epsilon}{2}\Big)\|x\|=(1+\epsilon)^{-1}\|x\|.
\end{align*}
which shows that $S$ is $(1+\varepsilon)$-isomorphic embedding and therefore completes the proof.
\end{proof}

The second main result is the following.

\begin{thm}\label{thm:richAiIdeals}Let $X$ be a Banach space. Then there exists a rectangular exceedingly rich family $\SSS$ of subspaces of $X\times X^*$ such that for every $V\times W\in \SSS$ there exists a norm-one operator $R:X\to V^{**}$ such that 
\begin{enumerate}[label=\textnormal{(R\alph*)}]
\item\label{list:Ra} $Rx=x$ for all $x\in V$. 
\item\label{list:Rb} $Rx(x^*|_{V})= x^*(x)$ for all $x\in X$ and for all $x^{*}\in W$.
\end{enumerate}
\end{thm}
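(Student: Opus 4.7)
The plan is to build the desired family out of the rectangular Skolem-like function $\phi:\PP(X\times X^*)\to\PP(X\times X^*)$ provided by the Key Lemma~\ref{lem:keyLemma}. The family $\SSS:=\{\overline{\phi(A)}\colon A\subset X\times X^*\}$ is rectangular and large in the sense of Skolem-like functions, so by Theorem~\ref{thm:main3} it contains an exceedingly rich subfamily $\SSS_0$, which inherits rectangularity. Hence it suffices to show that for every $A\subset X\times X^*$, writing $V:=\overline{\phi_X(A)}$ and $W:=\overline{\phi_{X^*}(A)}$, there exists a norm-one operator $R:X\to V^{**}$ satisfying \ref{list:Ra} and \ref{list:Rb}.

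For the construction of $R$ I would use an ultrafilter-limit argument built on the approximations from the Key Lemma. Let $\Lambda$ be the directed set of tuples $\lambda=(E,B,D,\varepsilon)$ where $E\subset X$ is a finite-dimensional subspace containing the finite set $B\subset\phi_X(A)$, $D\subset\phi_{X^*}(A)$ is finite and $\varepsilon\in(0,1)$, ordered by inclusion in the first three coordinates and reverse order on $\varepsilon$. For each $\lambda$ the Key Lemma furnishes an operator $T_\lambda:E\to V$ with $T_\lambda|_B=\Id_B$, $\|T_\lambda\|\leq 1+\varepsilon$ and $|d(T_\lambda x)-d(x)|\leq\varepsilon\|d\|\|x\|$ whenever $d\in D$ and $x\in E$. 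Fix an ultrafilter $\U$ on $\Lambda$ that refines the order filter (so each tail $\{\lambda'\geq\lambda\}$ belongs to $\U$) and set
\[
Rx(v^*):=\lim_{\U}\,v^*(T_\lambda x),\qquad x\in X,\;v^*\in V^*.
\]
The limit exists since the net $v^*(T_\lambda x)$ is eventually defined (once $x\in E_\lambda$) and bounded in modulus by $(1+\varepsilon)\|v^*\|\|x\|$; linearity in $v^*$ shows $Rx\in V^{**}$, while additivity and homogeneity in $x$ follow from the identity $T_\lambda(x+y)=T_\lambda x+T_\lambda y$ which holds eventually along $\U$. Passing to the limit also yields $\|Rx\|\leq\|x\|$.

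To verify \ref{list:Ra}, note that for $v\in\phi_X(A)$ eventually $v\in B_\lambda$, hence $T_\lambda v=v$ and therefore $Rv(v^*)=v^*(v)$ for every $v^*\in V^*$, so under the canonical embedding $V\hookrightarrow V^{**}$ we have $Rv=v$; the general case $v\in V$ follows by density of $\phi_X(A)$ in $V$ together with continuity of $R$, and this in turn forces $\|R\|=1$. To verify \ref{list:Rb}, fix $x\in X$; for $x^*\in\phi_{X^*}(A)$ eventually $x^*\in D_\lambda$ and $\varepsilon$ is arbitrarily small, so $x^*(T_\lambda x)\to x^*(x)$, giving $Rx(x^*|_V)=\lim_{\U}x^*|_V(T_\lambda x)=\lim_{\U}x^*(T_\lambda x)=x^*(x)$; since $W\ni x^*\mapsto x^*|_V\in V^*$ is $1$-Lipschitz and $R$ is continuous, a density argument extends the identity from $\phi_{X^*}(A)$ to the whole of $W$. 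The only real subtlety is organising $\Lambda$ and $\U$ so that the various ``eventually'' statements can be applied simultaneously to any specified $x, v, v^*, x^*$, but this is standard bookkeeping once the directed structure is set up as above.
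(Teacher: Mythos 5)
Your proposal is correct and follows essentially the same route as the paper: reduce via Theorem~\ref{thm:main3} to the family generated by the rectangular Skolem-like function of Key Lemma~\ref{lem:keyLemma}, then define $R$ as an ultrafilter limit of the local operators $T_{(E,B,D,\varepsilon)}$ over the same directed set of tuples, verifying \ref{list:Ra} and \ref{list:Rb} by the same tail-plus-density arguments. The only cosmetic difference is that the paper extends each $T_I$ by zero to all of $X$ before taking the limit, whereas you work with the eventually-defined net directly; both are fine.
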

\begin{proof}
By Theorem~\ref{thm:main3}, it suffices to find a rectangular family $\SSS$ large in the sense of Skolem-like functions such that for each $V\times W\in \SSS$ there exists a norm-one operator $R:X\to V^{**}$ satisfying both \ref{list:Ra} and \ref{list:Rb}.

Let $\phi:\PP(X\times X^*)\to \PP(X\times X^*)$ be the rectangular Skolem-like function from Key Lemma ~\ref{lem:keyLemma}. Pick $A\subset X\times X^*$ and put $V:=\overline{\phi_X(A)}$, $W:=\overline{\phi_{X^*}(A)}$. Let 
\[\begin{split}
\varGamma=\{(E,B,D,\epsilon)\colon & E\subset X \text{ finite-dimensional},\ B\in [E\cap \phi_X(A)]^{<\omega},\\
 & D\in [\phi_{X^*}(A)]^{<\omega}, \ \epsilon>0\}
\end{split}\]
be directed by the relation $\leq$ defined as follows 
$(E,B,D,\epsilon)\leq (E^\prime,B^\prime,D^\prime,\epsilon^\prime)$ if and only if $E\subset E^\prime$, $B\subset B^\prime$, $D\subset D^\prime$ and $\epsilon^\prime \leq \epsilon$.

Since $\phi$ is as in Key Lemma~\ref{lem:keyLemma}, for every $I=(E^\prime,B^\prime,D^\prime,\epsilon^\prime)$ there exists a bounded linear operator
$T_I:E\to V$ satisfying conditions \ref{list:keyLemma1}, \ref{list:keyLemma2} and \ref{list:keyLemma3} of the lemma.

We define $R_I:X\to V$ by 
\begin{displaymath}
R_Ix= \left\{
\begin{array}{ll}
T_Ix & \text{ if }x\in E;\\
0 & \text{ otherwise }.
\end{array} \right.
\end{displaymath}
 
We note that for every $x\in X$ and $x^*\in X^*$, $\{x^*(R_Ix):I\in \varGamma\}$ is a bounded set of real numbers and so relatively compact. We fix a directed ultrafilter $\mathcal{U}$ in $\varGamma$ (that is, $\U$ is nonprincipal ultrafilter satisfying $\{i\in\Gamma\colon i\geq i_0\}\in \U$ for every $i_0\in\Gamma$) and define a function $R:X\to V^{**}$ by the formula:
\[Rx(x^*)=\lim_{\mathcal{U}}x^*(R_Ix),\qquad x\in X,\; x^*\in V^*.\]

It is readily seen that $R$ is a linear map. Moreover, for arbitrary $x \in B_X$, $x^*\in B_{V^*}$ and $\epsilon>0$, if $I_0=(\mathrm{span}\{x\},\emptyset,\emptyset,\epsilon)$ then for every $I\geq I_0$ we have $|x^*(T_Ix)|\leq \|T_I\|\leq (1+\epsilon)$ and therefore
\[|Rx(x^*)|=\lim_{\mathcal{U}}|x^*(R_Ix)|\leq (1+\epsilon)\]
and we deduce that $\|R\|\leq 1$.

To verify \ref{list:Ra}, we let $x \in \phi_X(A)$ be arbitrary and set $I_0 = (\mathrm{span}\{x\}, \{x\}, \emptyset, 1)$. Then, for any $I \geq I_0$, recalling \ref{list:keyLemma1}, for all $x^* \in V^*$, we have
\[x^*(x)=x^*(T_Ix)=x^*(R_Ix),\]
which implies
\[Rx(x^*) = \lim_{\mathcal{U}} x^*(R_Ix)=\lim_{I\geq I_0} x^*(R_Ix)=x^*(x).\]
Thus, we have $Rx=x$ for $x\in \phi_X(A)$ and by the continuity of $R$ we obtain \ref{list:Ra}.

To verify \ref{list:Rb}, we let $x\in X$ and $x^*\in \phi_{X^*}(A)$ be arbitrary. Given $\delta>0$ we set $I_0=(\mathrm{span}\{x\},\emptyset, \{x^*\}, \delta/(1+\|x^*\|\|x\|))$. For every $I=(E,B,D,\epsilon)\geq I_0$, according to \ref{list:keyLemma3}, we have

\[|(x^*|_{V})(T_Ix)-x^*(x)| \leq \|(x^*\circ T_I-x^*)|_E\| \|x\|\leq \epsilon\|x^*\|\|x\|<\delta.\] 
This implies
\begin{align*}
|Rx(x^*|_{V})- x^*(x)|&= \lim_{\mathcal{U}}|(x^*|_{V})(T_Ix)-x^*(x)|\leq \delta.
\end{align*}
As $\delta>0$ is arbitrary, we can conclude that $Rx(x^*|_{X_M})=x^*(x)$. Furthermore, due to the continuity of $R$, we can deduce that $Rx(x^*|_{V})=x^*(x)$ holds for all $x\in X$ and for all $x^{*}\in \overline{\phi_{X^*}(A)} = W$.
\end{proof}

\begin{cor}\label{cor:projectionAiIdeal}Let $X$ be a Banach space. Then there exists a rectangular exceedingly rich family $\SSS$ of subspaces of $X\times X^*$ such that for every $V\times W\in \SSS$ there exists a norm-one projection $P:X^*\to X^*$ with $\ker P = V^\perp$ and $\rng P\supset W$.
\end{cor}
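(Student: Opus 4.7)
The plan is to apply Theorem~\ref{thm:richAiIdeals} to produce a rectangular exceedingly rich family $\SSS\subset\PP(X\times X^*)$ together with, for each $V\times W\in\SSS$, a norm-one operator $R:X\to V^{**}$, and then to manufacture the desired projection $P$ directly from $R$ by pulling back through the restriction $X^*\to V^*$. No further manipulation of the rich family is required, so apart from writing down the right formula, the proof reduces to routine verifications.

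Given $V\times W\in\SSS$ and the corresponding $R$, I would define $P:X^*\to X^*$ by
\[
P(x^*)(x) := (Rx)(x^*|_V),\qquad x\in X,\ x^*\in X^*.
\]
Linearity in both variables is immediate, and $\|R\|\leq 1$ gives $|P(x^*)(x)|\leq\|Rx\|\cdot\|x^*|_V\|\leq\|x\|\cdot\|x^*\|$, so $P$ is well defined and $\|P\|\leq 1$. Property \ref{list:Ra} says that $Rv$ equals the canonical image of $v$ in $V^{**}$ whenever $v\in V$, hence $P(x^*)(v)=(Rv)(x^*|_V)=x^*(v)$ for $v\in V$; that is, $P(x^*)|_V=x^*|_V$. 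Feeding this identity back into the definition yields $P(P(x^*))(x)=(Rx)(P(x^*)|_V)=(Rx)(x^*|_V)=P(x^*)(x)$, so $P^2=P$. The same observation gives $\ker P=V^\perp$: the inclusion $V^\perp\subset\ker P$ is immediate, while conversely $P(x^*)=0$ forces $x^*|_V=P(x^*)|_V=0$, i.e.\ $x^*\in V^\perp$. Finally, property \ref{list:Rb} states $(Rx)(x^*|_V)=x^*(x)$ for every $x\in X$ and every $x^*\in W$, which is precisely $P(x^*)=x^*$ on $W$, so $W\subset\rng P$.

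The only remaining nuisance is that the statement asks for $\|P\|=1$ rather than merely $\|P\|\leq 1$: as a non-zero idempotent, $P$ automatically has norm at least $1$ as soon as $V\neq\{0\}$, and one may preserve the exceedingly rich property while excluding the degenerate case by restricting $\SSS$ to those members containing a fixed non-zero $(x_0,0)$ in $X\times X^*$ (cofinality can be ensured by first enlarging each $Y$ to $Y\cup\{(x_0,0)\}$, and rectangularity and up-directedness are obviously preserved). I expect no serious obstacle beyond this bookkeeping; the one subtle point to keep in mind is reading the equality $Rv=v$ from \ref{list:Ra} correctly as equality inside $V^{**}$ under the canonical embedding $V\hookrightarrow V^{**}$, which is what makes the computation $P(x^*)|_V=x^*|_V$ go through.
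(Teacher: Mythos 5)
Your proposal is correct and follows essentially the same route as the paper: both apply Theorem~\ref{thm:richAiIdeals} and define $P$ by the identical formula $Px^*(x):=Rx(x^*|_V)$, with the same verifications of idempotence, $\ker P = V^\perp$, and $\rng P\supset W$ from \ref{list:Ra} and \ref{list:Rb}. Your closing remark on excluding the degenerate case $V=\{0\}$ to get $\|P\|=1$ rather than $\|P\|\leq 1$ is a harmless extra piece of bookkeeping the paper does not bother with.
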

\begin{proof}Let $\SSS$ be as in Theorem~\ref{thm:richAiIdeals}. Then given $V\times W\in \SSS$ there exists norm-one mapping $R:X\to V^{**}$ such that \ref{list:Ra} and \ref{list:Rb} hold. Define $P:X^*\to X^*$ by 
\begin{equation}\label{eq:formulaProjection}
Px^*(x):=Rx(x^*|_{V}),\qquad x\in X,\; x^*\in X^*.
\end{equation}
Then indeed $P$ is a norm-one operator and using condition \ref{list:Ra} we obtain $Px^*|_V = x^*|_V$, $x^*\in X^*$ which implies that $P$ is a projection. Condition \ref{list:Rb} implies that $\rng P\supset W$. It is immediate from \eqref{eq:formulaProjection} that $Px^* = 0$ for $x^*\in V^\perp$, so we have $V^\perp \subset \ker P$. Finally, given $x^*\in \ker P$ we have $x^*|_V = Px^*|_V \equiv 0$ and so $\ker P\subset V^\perp$.
\end{proof}

One can wonder what is the range of the projection given by Corollary~\ref{cor:projectionAiIdeal}. Theorem~\ref{thm:asplundAndWLD} below shows that this issue might be quite complicated and might depend on the structure of the Banach space $X$ (as one can see from the proof, this result is essentially known, we only use outcomes of Section~\ref{sec:formulations} to reformulate everything using rich families instead of suitable models).

\begin{thm}\label{thm:asplundAndWLD}
Let $X$ be a Banach space. Consider the following conditions.
\begin{enumerate}[label=\textnormal{(\roman*)}]
    \item\label{it:asplund} There exists a rectangular exceedingly rich family $\SSS$ of subspaces of $X\times X^*$ such that for every $V\times W\in \SSS$ there exists a norm-one projection $P:X^*\to X^*$ with $\ker P = V^\perp$ and $\rng P = W$.
    \item\label{it:wld} There exists a rectangular exceedingly rich family $\SSS$ of subspaces of $X\times X^*$ such that for every $V\times W\in \SSS$ there exists a norm-one projection $P:X^*\to X^*$ with $\ker P = V^\perp$ and $\rng P \supset \overline{W}^{w^*}$.
    \item\label{it:wldII} There exists a rectangular exceedingly rich family $\SSS$ of subspaces of $X\times X^*$ such that for every $V\times W\in \SSS$ there exists a $w^*$-$w^*$ continuous norm-one projection $P:X^*\to X^*$ with $\ker P = V^\perp$ and $\rng P \supset W$.
\end{enumerate}
Then
\begin{itemize}
    \item \ref{it:asplund} holds $\Leftrightarrow$ $X$ is Asplund,
    \item \ref{it:wld} holds $\Leftrightarrow$ \ref{it:wldII} holds $\Leftrightarrow$ $X$ is WLD.
\end{itemize}
\end{thm}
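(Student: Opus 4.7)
The approach is to translate known characterizations of Asplund and WLD Banach spaces, available in the languages of suitable models and projectional skeletons, into the rich-family language developed in Section~\ref{sec:formulations}. The main tools will be Theorem~\ref{thm:mainComparision}, Fact~\ref{fact:productModels}, Lemma~\ref{lem:skeleton}, Lemma~\ref{lem:combineRich} and Corollary~\ref{cor:projectionAiIdeal}. As the remark preceding the theorem signals, both characterizations are essentially known; the work consists almost entirely of bookkeeping.

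For the equivalence \ref{it:asplund}$\Leftrightarrow$Asplund, the forward direction is elementary. A norm-one projection $P$ on $X^*$ with $\ker P = V^\perp$ and $\rng P = W$ exhibits $W$ as isometric to $X^*/V^\perp$, which by Hahn--Banach is in turn isometric to $V^*$ via the restriction map. Given a separable $Y\subset X$, \ref{it:cofinalFirst} produces a separable $V\times W\in\SSS$ with $Y\subset V$, so $V^*$ is separable (being isometric to $W$) and hence $Y^*$ is separable (as a quotient of $V^*$). Thus $X$ is Asplund. For the converse, I would invoke the suitable-models characterization of Asplundness---for a suitable finite list of formulas $\Phi$ and countable set $S$, every $M\prec(\Phi;S)$ supports a norm-one projection $P_M$ on $X^*$ with $\ker P_M = X_M^\perp$ and $\rng P_M = (X^*)_M$---and combine it with Fact~\ref{fact:productModels} to obtain a rectangular family exceedingly large in the sense of suitable models satisfying the required conclusion; Theorem~\ref{thm:mainComparision}\eqref{it:modelsImplyRich} then converts this to an exceedingly rich family witnessing \ref{it:asplund}.

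For the WLD equivalences, the main bridge is Lemma~\ref{lem:skeleton} applied with the norming subspace $D = X^*$, together with the standard fact that $X$ is WLD precisely when $X$ admits a projectional skeleton whose induced set coincides with $X^*$. A projection $P\colon X^*\to X^*$ is $w^*$--$w^*$ continuous iff $P = Q^*$ for some $Q\colon X\to X$; for such $P$, the condition $\ker P = V^\perp$ translates into $\rng Q = V$, while $\rng P = (\ker Q)^\perp$ is automatically $w^*$-closed. Thus passing to adjoints in the rich family supplied by Lemma~\ref{lem:skeleton}\ref{it:richInducedBest} yields exactly condition \ref{it:wldII}, with $\rng P = \overline{W}^{w^*}\supset W$, proving WLD$\Rightarrow$\ref{it:wldII}. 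The implication \ref{it:wldII}$\Rightarrow$\ref{it:wld} is immediate since a $w^*$-closed range containing $W$ must contain $\overline{W}^{w^*}$. For \ref{it:wld}$\Rightarrow$WLD (closing the loop), I would combine via Lemma~\ref{lem:combineRich} the family witnessing \ref{it:wld} with the one produced by Corollary~\ref{cor:projectionAiIdeal}, and argue that on the intersection $\rng P\supset\overline{W}^{w^*}$ together with the presence of the ai-ideal projection forces the algebraic identity $X = \overline{V+W_\perp}$ of Lemma~\ref{lem:skeleton}\ref{it:richInduced}, giving the desired projectional-skeleton structure with induced set $X^*$.

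The main technical obstacle is the converse direction in each equivalence. Corollary~\ref{cor:projectionAiIdeal} delivers only $\rng P\supset W$, whereas \ref{it:asplund} requires equality and \ref{it:wldII} requires the far stronger $w^*$--$w^*$ continuity of $P$. Upgrading these weaker statements requires either directly modifying the Skolem-like function from the Key Lemma~\ref{lem:keyLemma} to absorb Hahn--Banach extensions of functionals on $V$ (for the Asplund case) or preimages under the adjoint map (for WLD) into $\phi$, or, more cleanly, invoking the suitable-models characterizations of Asplundness and WLD and relying on Theorem~\ref{thm:mainComparision} as a black-box translation; this latter route is what the paper signals when it describes the result as ``essentially known.''
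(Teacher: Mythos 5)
Your proposal is correct in substance and rests on the same machinery as the paper (Kubi\'s's characterizations of Asplund and WLD spaces via projectional skeletons, translated into rich families), but it deviates locally in ways worth recording. For the equivalence of \ref{it:asplund} with Asplundness, the paper handles \emph{both} directions in one stroke: it applies Lemma~\ref{lem:skeleton} to the Banach space $X^*$ with norming subspace $D=X\subset X^{**}$, so that \ref{it:richInducedBest} becomes literally condition \ref{it:asplund} (with the roles of the two factors swapped), and \ref{it:skeletonInduced} becomes ``$X$ is contained in the set induced by a projectional skeleton on $X^*$'', which is Asplundness by \cite[Proposition 26]{KubisSkeleton}. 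You instead prove the forward implication by the elementary observation $W\cong X^*/V^\perp\cong V^*$ applied to separable members of the family --- a correct and more self-contained argument than the paper's --- while for the converse you propose to re-derive the suitable-models statement ($\rng P_M=(X^*)_M$, $\ker P_M=X_M^\perp$) and push it through Theorem~\ref{thm:mainComparision}; that works, but it amounts to unpacking Lemma~\ref{lem:skeleton} by hand, and the ``main technical obstacle'' you flag (upgrading $\rng P\supset W$ to $\rng P=W$) dissolves once one notices the dual role-swap. For the WLD block your treatment of WLD$\Rightarrow$\ref{it:wldII}$\Rightarrow$\ref{it:wld} via adjoints of the skeleton projections coincides with the paper's. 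In the remaining implication \ref{it:wld}$\Rightarrow$WLD, your detour through Corollary~\ref{cor:projectionAiIdeal} and Lemma~\ref{lem:combineRich} is unnecessary: from \ref{it:wld} alone one has $V^\perp\cap(W_\perp)^\perp\subset\ker P\cap\rng P=\{0\}$, hence $X=\overline{V+W_\perp}$ for every member of the given family, and Lemma~\ref{lem:skeleton}\ref{it:richInduced}$\Rightarrow$\ref{it:skeletonInduced} with $D=X^*$ finishes the argument exactly as you indicate.
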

\begin{proof}
By \cite[Proposition 26]{KubisSkeleton}, $X$ is Asplund if and only if $X$ is a subset of the set induced by a projectional skeleton on $X^*$. Thus, by Lemma~\ref{lem:skeleton} (applied to $X\times D = X^*\times X$) we obtain that \ref{it:asplund} holds if and only if $X$ is Asplund.

It is immediate that \ref{it:wldII} implies \ref{it:wld}.

By \cite[Corollary 25]{KubisSkeleton}, $X$ is WLD if and only if $X^*$ is the set induced by a projectional skeleton on $X$.

First, suppose that $X$ is WLD. By Lemma~\ref{lem:skeleton}, there exists a rectangular exceedingly rich family $\SSS$ of subspaces of $X\times X^*$ such that for every $V\times W\in \SSS$ there exists a projection $Q:X\to X$ with $Q[X] = V$ and $\ker Q = W_\perp$. Then $P = Q^*:X^*\to X^*$ is a $w^*$-$w^*$ continuous projection with $\rng P = \overline{W}^{w^*}$ and $\ker P = V^\perp$. Thus, if $X$ is WLD then both \ref{it:wld} and \ref{it:wldII} hold.

Finally, if \ref{it:wld} holds, then for every $V\times W\in\SSS$ we have $X = \overline{V + W_\perp}$ (because otherwise there is $x^*\in X^*\setminus\{0\}$ such that $x^*|_{V + W_\perp}\equiv 0$, and therefore $x^*\in V^\perp\cap (W_\perp)^\perp\subset \rng P\cap \ker P = \{0\}$, contradiction). Thus, by Lemma~\ref{lem:skeleton} we obtain that $X^*$ is the set induced by a projectional skeleton on $X$ and so $X$ is WLD.
\end{proof}

\section{Almost isometric local retracts}\label{sec:aiLocRetr}

The main aim of this section is to show that small modifications of the proof of Theorem~\ref{thm:richAiIdeals} lead quite directly to the proof of Theorem~\ref{thm:main2}. This gives a different approach when compared to the proof of \cite[Theorem 5.5]{QuilisZoca}, where a similar (but slightly weaker) result was proved.

\begin{lemma}[Key Lemma, metric case]\label{lem:keyLemmaMetric}
	Let $X$ be a metric space. Then there exists a rectangular Skolem-like function $\phi:\PP(X\times \Lip_0(X))\to \PP(X\times \Lip_0(X))$ such that for every $A\subset X\times \Lip_0(X)$ the set $\phi_{\Lip_0(X)}(A)$ is $\Rat$-linear subspace and the following holds :
 
    Let $\varepsilon > 0$, $D \in [\phi_{\Lip_0(X)}(A)]^{<\omega}$, $B \in [\phi_X(A)]^{<\omega}$ and $E\subset X$ be a finite set with $E \supset B$. Then there is a Lipschitz mapping $T: E \to \phi_X(A)$ such that
	\begin{enumerate}[label=\textnormal{(K'\alph*)}]
		\item\label{list:keyLemma1Metric} $Tx = x$ for all $x\in B$,
		\item\label{list:keyLemma2Metric} $(1+\varepsilon)^{-1}d(x,y) \leq d(Tx,Ty) \leq (1+\varepsilon) d(x,y)$  for all $x,y \in E$,
		\item\label{list:keyLemma3Metric}$\|(d\circ T - d)|_E\|\leq \varepsilon \|d\|$ for all $d\in D$,
	\end{enumerate}
\end{lemma}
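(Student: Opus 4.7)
The plan is to mirror Lemma~\ref{lem:keyLemma}, exploiting a simplification specific to the metric setting: because $B$, $D$, and $k$ are all finite, the encoding map
\[
\psi_{k,B,D}: X^k \to \Rea^{k|B|} \times \Rea^{k^2} \times \Rea^{k|D|}, \quad (u_1, \ldots, u_k) \mapsto \big((d(u_i, b))_{i, b},\, (d(u_i, u_j))_{i, j},\, (h(u_i))_{h \in D, i}\big)
\]
takes values in a finite-dimensional (hence separable) Euclidean space. Thus for each triple $(k, B, D)$ I may pick a countable $F_{k, B, D} \subset X$ with $\psi_{k, B, D}(F_{k, B, D}^k)$ dense in $\psi_{k, B, D}(X^k)$, directly replacing the $\delta$-nets of $S_{\ell_1^k}$ and $nB_V$ from the linear case, which were present only to handle arbitrary affine combinations.

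Using these $F_{k, B, D}$ I would build $\phi$ by the inductive scheme from Lemma~\ref{lem:keyLemma}: set $D(A) := \Span_\Rat \pi_{\Lip_0(X)}(A)$, $\xi_0(A) := \pi_X(A)$,
\[
\xi_{n+1}(A) := \xi_n(A) \cup \bigcup\{F_{k, B, D}\colon k \in \Nat,\, B \in [\xi_n(A)]^{<\omega},\, D \in [D(A)]^{<\omega}\},
\]
$\xi(A) := \bigcup_n \xi_n(A)$, and $\phi(A) := \xi(A) \times D(A)$. The verification that $\phi$ is rectangular Skolem-like is a literal transcription of the corresponding argument in Lemma~\ref{lem:keyLemma}, as it only uses that $A \mapsto \pi_X(A)$ and $A \mapsto D(A)$ commute with up-directed unions.

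For the main property, fix $\varepsilon > 0$, finite $B \subset \phi_X(A)$, finite $D \subset \phi_{\Lip_0(X)}(A)$, and finite $E \supset B$. Enumerate $E \setminus B = \{e_1, \ldots, e_k\}$; if $k = 0$ take $T = \Id$. Otherwise let $c := \min\{d(x, y) \colon x \neq y \in E\} > 0$ and pick $\delta > 0$ with $\delta \leq c\varepsilon/(1+\varepsilon)$ and $2\delta \leq \varepsilon c \|h\|$ for every nonzero $h \in D$. By density, find $f_1, \ldots, f_k \in F_{k, B, D} \subset \phi_X(A)$ with $\|\psi_{k, B, D}(f_1, \ldots, f_k) - \psi_{k, B, D}(e_1, \ldots, e_k)\|_\infty < \delta$, and define $T: E \to \phi_X(A)$ by $T(b) := b$ for $b \in B$ and $T(e_i) := f_i$. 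Condition \ref{list:keyLemma1Metric} is immediate; for \ref{list:keyLemma2Metric}, a case analysis yields $|d(Tx, Ty) - d(x, y)| < \delta$ for all $x, y \in E$ (trivially on $B \times B$, from the encoded coordinates otherwise), whence $d(Tx, Ty)/d(x, y) \in [(1+\varepsilon)^{-1}, 1+\varepsilon]$; for \ref{list:keyLemma3Metric}, interpreting $\|\cdot\|$ as the Lipschitz constant on $E$, case analysis shows that $|(h \circ T - h)(x) - (h \circ T - h)(y)|$ is bounded by $0$, $\delta$, or $2\delta$ depending on which of $x, y$ lie in $B$, so its Lipschitz constant on $E$ is at most $2\delta/c \leq \varepsilon \|h\|$.

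The only delicate point I expect is cosmetic bookkeeping — coordinating a single $\delta$ that simultaneously handles \ref{list:keyLemma2Metric} (governed by $c$) and \ref{list:keyLemma3Metric} (governed by $c$ and the minimum nonzero Lipschitz norm in $D$). No deeper difficulty arises, as the linear-algebraic machinery of Lemma~\ref{lem:calculations} becomes unnecessary: a finite set admits no affine combinations to track, and the distortion bound follows directly from the uniform control $|d(Tx, Ty) - d(x, y)| < \delta$ and the positivity of $c$.
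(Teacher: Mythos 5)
Your proof is correct, but it takes a genuinely different route from the paper's. The paper deliberately funnels the metric case through Lipschitz-free spaces: its encoding map records the norms $\|x+\sum_i a_i\delta(u_i)\|$ over $\delta$-nets of $nB_{\F(B)}$ and $S_{\ell_1^k}$, and then invokes the Banach-space Lemma~\ref{lem:calculations} applied to $\F(B)\oplus\F(E\setminus B)$ to produce a linear $(1+\varepsilon)$-isomorphism $\widehat{T}$ on $\F(E)$, finally setting $T=\delta^{-1}\circ\widehat{T}\circ\delta$. You instead exploit the fact that a finite metric space carries no affine combinations to control: encoding only the finitely many distances $d(u_i,b)$, $d(u_i,u_j)$ and values $h(u_i)$, you get a map into a finite-dimensional Euclidean space, a single countable dense preimage set per triple $(k,B,D)$ (no nets, no radius parameter $R$), and the distortion estimate $|d(Tx,Ty)-d(x,y)|<\delta$ combined with $c=\min\{d(x,y):x\neq y\in E\}>0$ immediately yields \ref{list:keyLemma2Metric}. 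This is essentially the ``much easier metric version of Lemma~\ref{lem:calculations}'' that the authors explicitly mention (in the remark following the lemma) as the strategy of Quilis--Zoca, which they chose not to write out only to reuse machinery already proven; your version is more elementary and self-contained, at the cost of not literally reusing Lemma~\ref{lem:calculations}. The one point to nail down is the meaning of the norm in \ref{list:keyLemma3Metric}: the paper's derivation via the ``moreover'' part of Lemma~\ref{lem:calculations} bounds the functional norm of $\widehat{d}\circ\widehat{T}-\widehat{d}$ on $\Span\{\delta(e):e\in E\}\subset\F(X)$, i.e.\ the Lipschitz constant of $d\circ T-d$ on $E\cup\{0\}$ (with value $0$ at the basepoint), whereas you bound the Lipschitz constant on $E$ alone; if $0\notin E$ the extra pairs $(e,0)$ contribute $|d(Te)-d(e)|/d(e,0)<\delta/d(e,0)$, which your argument also controls after shrinking $\delta$ below $\varepsilon\|h\|\min_{e\in E}d(e,0)$ for each nonzero $h\in D$ --- a purely cosmetic adjustment since $E$ and $D$ are finite, not a gap.
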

\begin{proof}The proof is very similar to the proof of Key Lemma~\ref{lem:keyLemma}. The additional ingredient is only to use Lipschitz-free spaces. We pick arbitrary $0\in X$, so $X$ is a pointed metric space.

Given sets $A\subset X\times \Lip_0(X)$, finite sets $B\subset X$ and $D\subset \Lip_0(X)$, for each $n,k \in \Nat$ and $\delta\in \Rat_+$, we pick a finite $\delta$-net $A_{k,\delta}$ of $S_{\ell_1^k}$ and a finite $\delta$-net $N_{n,\delta}(B)\subset \F(B) := \Span \delta(B) (\subset \F(X))$ of $nB_{\F(B)}$. Now since for any $R\in\Nat$ the mapping $\psi_{n,k,\delta,R}: (B_X(0,R))^k \to (\mathbb{R}^{N_{n,\delta}(B) \times A_{k,\delta}} \times \mathbb{R}^{D \times k},\|\cdot\|_\infty)$, defined by the formula
\[
		\psi_{n,k,\delta,R}(u) := \left(\left(\left\|x + \sum_{i=1}^{k} a_i\delta(u_i) \right\|\right)_{x\in N_{n,\delta}(B),a\in A_{k,\delta}},\left(d(u_i)\right)_{d\in D, i\leq k}\right),
\]
has separable range, we may pick a countable set $F_{n,k,\delta,R}(B,D)\subset B_X(0,R)$ such that $\psi_{n,k,\delta,R}(F_{n,k,\delta,R}(B,D)^k)$ is dense in the range of $\psi_{n,k,\delta,R}$.
Next, if $D(A):=\Span_\Rat \pi_{\Lip_0(X)}(A)\subset X^*$ for every $A\subset X\times \Lip_0(X)$, we inductively define $\xi(A)\subset X$ for $A\subset X\times \Lip_0(X)$ as follows:
\[\begin{split}
\xi_0(A)&:=\pi_X(A),\\
\xi_{n+1}(A)&:=\xi_n(A)\cup \bigcup\Big\{F_{n,k,\delta,R}(B,D)\colon n,k,R\in\Nat,\; \delta\in\Rat_+,\\
&\qquad\qquad B\subset \xi_n(A) \text{ and } D\subset D(A) \text{ are finite sets}\Big\},\\
\xi(A)&:=\bigcup_{n=0}^\infty \xi_n(A).
\end{split}\]

Similarly as in the proof of Key Lemma~\ref{lem:keyLemma} we check that indeed the mapping $\phi$ defined by $\phi(A):=\xi(A)\times D(A)$, $A\subset X\times \Lip_0(X)$ is a rectangular Skolem-like function.

Now, pick $\varepsilon>0$, $A\subset X\times X^*$, $D\subset D(A) = \phi_{\Lip_0(X)}(A)$ finite, $B\subset \phi_X(A) = \xi(A)$ finite and finite set $E\subset X$ with $E\supset B$. Let $R\in\Nat$ be such that $E\subset B(0,R)$ and enumerate $\{e_1,\ldots,e_k\} = E\setminus B$. Let $\zeta$ be the function from Lemma~\ref{lem:calculations} applied to $m=R$ and $\F(B)\oplus \F(E\setminus B)$ and let $\delta>0$ be such that $\zeta(\delta)<\varepsilon$. Now, pick $n\in \Nat$ with $n\geq \tfrac{1}{\delta}$. By the choice of $F_{n,k,\delta,R}(B,D)$, there are $f_1,\ldots,f_k\in F_{n,k,\delta,R}(B,D)\subset \xi(A)$ such that $\|\psi_{n,k,\delta,R}((f_i)) - \psi_{n,k,\delta,R}((e_i))\|_\infty$. Then, by Lemma~\ref{lem:calculations}, since $\zeta(\delta)<\varepsilon$, we have that the mapping $\widehat{T}:\F(E) = \F(B)\oplus \F(E\setminus B)\to \F(\overline{\xi(A)}) = \F(\overline{\phi_X(A)})$ given by
 \[
 \widehat{T}(x+\sum_{i=1}^{k}a_i \delta(e_i)):=x+\sum_{i=1}^{k}a_i \delta(f_i),\quad x\in \F(B),\; a\in \Rea^{k}
 \]
 is $(1+\varepsilon)$-isomorphic embedding. Thus, the mapping $T:E\to \phi_X(A)$ given by $T(x):=\delta^{-1}(\widehat{T}(\delta(x)))$ satisfies \ref{list:keyLemma1Metric} and \ref{list:keyLemma2Metric}. Finally, since for any $d\in D$ we have $|d(e_i)-d(f_i)|<\delta$, $i=1,\ldots,k$, the moreover part of Lemma~\ref{lem:calculations} implies that \ref{list:keyLemma3Metric} holds as well.
\end{proof}

\begin{remark}In the proof of Lemma~\ref{lem:keyLemmaMetric} we used Lemma~\ref{lem:calculations}. We could also formulate and prove a metric version of Lemma~\ref{lem:calculations}, whose proof is much easier. This is the basic strategy of the approach from \cite{QuilisZoca} (see the proof of Lemma 5.2 therein). In order to shorten a bit the argument, we used rather the already proven Lemma~\ref{lem:calculations} (even though an easier statement would be sufficient as well in this case).
\end{remark}

\begin{thm}\label{Thm:ExeedRichFamilyMetric}Let $X$ be a metric space. Then there exists an exceedingly rich family $\SSS$ of closed subsets of $X$ such that each $F\in \SSS$ is ai local retract in $X$.
\end{thm}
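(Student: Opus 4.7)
The plan is to parallel the argument of Theorem~\ref{Thm:ExeedRichFamilyai}: first produce a family large in the sense of Skolem-like functions, and then appeal to Theorem~\ref{thm:main3} to extract an exceedingly rich sub-family. Concretely, let $\phi:\PP(X\times \Lip_0(X))\to\PP(X\times \Lip_0(X))$ be the rectangular Skolem-like function provided by the metric Key Lemma~\ref{lem:keyLemmaMetric}, and define $\psi:\PP(X)\to\PP(X)$ by $\psi(A):=\phi_X(A\times\{0\})$. A routine verification (identical in spirit to the Banach case) shows that $\psi$ inherits conditions \ref{it:skolemNotIncrease}--\ref{it:skolemUpDirected} from $\phi$, so the family $\SSS_0:=\{\overline{\psi(A)}\colon A\subset X\}$ is large in the sense of Skolem-like functions. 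Once I check that each $Y=\overline{\psi(A)}$ is an ai local retract, Theorem~\ref{thm:main3} yields the desired exceedingly rich sub-family $\SSS\subset\SSS_0$.

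The core task is then to verify that $Y:=\overline{\psi(A)}$ is ai local retract in $X$. Given a finite $E\subset X$ and $\varepsilon\in(0,1)$, set $E_0:=E\cap Y$ and $E_1:=E\setminus Y$. If $E_0=E$ or $E_0=\emptyset$ the conclusion is immediate (take the identity on $E$, or apply Lemma~\ref{lem:keyLemmaMetric} with $B=\emptyset$, respectively). Otherwise let $\eta:=\min\{d(u,v)\colon u,v\in E,\;u\neq v\}>0$, fix auxiliary parameters $\delta,\alpha>0$ to be tuned later (both small compared to $\varepsilon$, and $\alpha$ small compared to $\varepsilon\eta$), and using density of $\psi(A)$ in $Y$ pick $\varphi(y)\in\psi(A)$ with $d(y,\varphi(y))<\alpha$ for each $y\in E_0$. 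Applying Lemma~\ref{lem:keyLemmaMetric} to the finite set $E\cup\varphi[E_0]$ with $B=\varphi[E_0]$ and $D=\emptyset$ produces a $(1+\delta)$-biLipschitz map $T':E\cup\varphi[E_0]\to\psi(A)\subset Y$ that fixes $\varphi[E_0]$. Then glue by defining $T:E\to Y$ via $T(y):=y$ on $E_0$ and $T(x):=T'(x)$ on $E_1$, so that $T$ automatically fixes $E\cap Y$.

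The main obstacle is the biLipschitz estimate for $T$ on mixed pairs $(y,x)\in E_0\times E_1$, where the linear ``projection trick'' $S=P+T(\Id-P)$ from the proof of Theorem~\ref{Thm:ExeedRichFamilyai} is unavailable. I intend to control such pairs by inserting the anchor $\varphi(y)$, using $T'(\varphi(y))=\varphi(y)$, the triangle inequality and the $(1+\delta)$-biLipschitz property of $T'$: this bounds $|d(Ty,Tx)-d(y,x)|$ by a quantity of the form $c_1\alpha+c_2\delta\cdot d(y,x)$; since $d(y,x)\geq\eta$, the $\alpha$ term absorbs into $\varepsilon\cdot d(y,x)$ once $\alpha/\eta$ and $\delta$ are chosen sufficiently small in terms of $\varepsilon$. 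This direct piecewise construction replaces the use of a linear projection of norm at most $\dim E$ in the Banach setting, and concludes the proof.
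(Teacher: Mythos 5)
Your proposal is correct and follows essentially the same route as the paper's proof: the same reduction via Theorem~\ref{thm:main3}, the same Skolem-like function $\psi(A)=\phi_X(A\times\{0\})$ from Lemma~\ref{lem:keyLemmaMetric}, and the same piecewise gluing with anchors $\varphi(y)\in\psi(A)$ near $E\cap Y$, with mixed pairs controlled exactly as you describe (the paper folds your $\alpha$ and $\delta$ into a single parameter chosen below $\varepsilon\,d(x,y)/2$ for distinct $x,y\in E$). No substantive differences.
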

\begin{proof}By Theorem~\ref{thm:main3}, it suffices to find a rectangular family $\SSS$ large in the sense of Skolem-like functions such that each $F\in \SSS$ is almost isometric local retract in $X$.

Pick the rectangular Skolem-like function $\phi:\PP(X\times \Lip_0(X))\to \PP(X\times \Lip_0(X))$ from the statement of Lemma~\ref{lem:keyLemmaMetric} and consider its projection onto the first coordinate given by $\psi(A):=\phi_X(A\times \{0\})$. It is easy to check that $\psi:\PP(X)\to \PP(X)$ is Skolem-function as well. Thus, in order to finish the proof it suffices to check that $\overline{\psi(A)}$ is almost isometric local retract for every $A\subset X$.

Pick $A\subset X$ and denote $V:=\overline{\psi(A)}$. Fix $\varepsilon>0$ and a finite set $E\subset X$. Pick $\delta>0$ small enough (more concretely, $\delta < \varepsilon/2$ and $\delta<\min\{\tfrac{\varepsilon d(x,y)}{2}\colon x,y\in E, x\neq y\}$) and a function $\varphi:E\cap V \to \psi(A)$ such that $d(\varphi(x),x)<\delta$ for every $x\in E\cap V$. We let $E' = E\cup \rng \varphi$, $B=\rng \varphi$, $D= \emptyset$ and use \ref{list:keyLemma1Metric} and \ref{list:keyLemma2Metric} to find $(1+\delta)$-biLipschitz embedding $T:E'\to \psi(A)$ satisfying $Tx=x$ for $x\in B$.

We define $S:E\to V$ by $Sx = x$ for $x\in E\cap V$ and $Sx = T(x)$ for $x\in E\setminus V$. It suffices to show that $S$ is $(1+\varepsilon)$-biLipschitz embedding. Since $T$ is $(1+\varepsilon)$-biLipschitz embedding, it suffices to consider $x\in E\cap V$, $y\in E\setminus V$ and estimate then $d(Sx,Sy)$. For $x\in E\cap V$, $y\in E\setminus V$ we have
\[\begin{split}
|d(Sx,Sy) - d(x,y)| & = |d(x,Ty) - d(x,y)|\leq \delta +  |d(Ty,\varphi(x)) - d(x,y)|\\
& = \delta +  |d(Ty,T\varphi(x)) - d(x,y)|\leq \delta + \delta d(x,y)\leq \varepsilon d(x,y).
\end{split}\]
Thus, $S$ is $(1+\varepsilon)$-biLipschitz embedding.
\end{proof}

Now, we could also proceed similarly as in the proofs of Theorem~\ref{thm:richAiIdeals} and Corollary~\ref{cor:projectionAiIdeal} and obtain a family of extension operators $E:\Lip_0(V)\to \Lip_0(X)$; however, in this case it seems to us that we would not get anything valuable since the existence of an extension operator $E:\Lip_0(V)\to \Lip_0(X)$ follows immediately from the existence of Hahn-Banach extension operator $H:\F(V)^*\to \F(X)^*$ which in turn is easily deduced from Theorem~\ref{thm:richAiIdeals}. In other words, using directly Corollary~\ref{cor:projectionAiIdeal} we obtain a family of projection on the space $\Lip_0(X) = \F(X)^*$.

Finally, let us recall that any nonseparable Lipschitz-free space contains an isomorphic copy of $\ell_1(\omega_1)$, see \cite[Theorem 2.1]{Kal11} or \cite[Proposition 3]{HajekNovotny} and also \cite[Theorem 3.9]{AACD20} for even more general statements, and therefore, nonseparable Lipschitz-free spaces are not WLD and also not Asplund. In particular, projections on $\Lip_0(X)$ given by Corollary~\ref{cor:projectionAiIdeal} are not $w^*$-$w^*$ continuous due to Theorem~\ref{thm:asplundAndWLD}.

\section{(Separable) determination theorems}\label{sec:appl}

In this section we exhibit the use of our results from the previous sections to obtain several ``(separable) determination theorem''. The main advantage is that the notion of exceedingly rich families makes it possible to combine several results together, see Lemma~\ref{lem:combineRich}. This enables us to prove independently some partial steps and then combine those together without the need of going through the proofs again. We refer the interested reader to \cite[Introduction]{C18}, where this is explained in a greater detail. The main results from this section are Theorem~\ref{thm:gurarii}, Theorem~\ref{thm:daugavet} and Theorem~\ref{thm:urysohn} from which using Lemma~\ref{lem:combineRich} we easily obtain Theorem~\ref{thm:main4}.

\begin{remark}\label{rem:moreApplications}
Results contained in this section should be understood just as a sample of possible applications as using similar techniques we are able to add several other properties to the list from Theorem~\ref{thm:main4} such as local diameter 2 property, diameter 2 property, strong diameter 2 property, almost squreness, octahedrality etc (we refer the interested reader to \cite{Abr15} where the notions mentioned above are defined and results of a similar nature were proved therein). Similarly, one could add some more properties of metric spaces to the statement of Theorem~\ref{thm:main5} such as being length, having long trapezoid property etc (we refer the interested reader to \cite{GPZ} and \cite{PZ}, where the connection to the above mentioned Daugavet property and octahedrality is proved).
\end{remark}

We recall that a Banach space $X$ is said to be a \emph{Gurari\u{\i} space} (an \emph{$L_1$-predual space}) if, for every $\epsilon > 0$ and for every isometric embedding $T : E \to X$ and for all finite-dimensional space $F$, if $E\subset F$, then $T$ admits an extension $\hat{T}: E \to  X$ such that  $(1+\epsilon)^{-1}\|x\|\leq \|\hat{T}\|\leq (1+\epsilon)\|x\|$ ($\|\hat{T}\|\leq (1+\epsilon)\|x\|$) for every $x\in F$ (see \cite[Proposition 2.7]{Ban18}) .  In other words, $X$ is a Gurari\u{\i} space (an $L_1$-predual space) if and only if it is an ai-ideal (a ideal) in every Banach space containing it.

The subsequent lemma is a slight modification of \cite[Proposition 2.5]{Ban18}. We provide the complete argument here for the reader's convenience.

\begin{lemma}\label{lem:inheritGurarii}
Let $X$ be a Gurari\u{\i} space (an $L_1$-predual space). If $Y$ is an ai-ideal on $X$, then
$Y$ is also a Gurari\u{\i} space (an $L_1$-predual space).
\end{lemma}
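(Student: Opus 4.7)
The plan is to use the characterization recalled just before the lemma: a space is Gurari\u{\i} (respectively $L_1$-predual) precisely when it admits almost isometric (respectively almost norm-preserving) extensions of isometric embeddings from finite-dimensional subspaces. So I fix $\varepsilon>0$, a pair of finite-dimensional spaces $E\subset F$, and an isometric embedding $T:E\to Y$; the goal is to produce $\tilde T:F\to Y$ extending $T$ with $(1+\varepsilon)^{-1}\|x\|\le\|\tilde Tx\|\le (1+\varepsilon)\|x\|$ (respectively $\|\tilde Tx\|\le(1+\varepsilon)\|x\|$) for every $x\in F$.

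Pick auxiliary parameters $\varepsilon_1,\varepsilon_2>0$ with $(1+\varepsilon_1)(1+\varepsilon_2)\le 1+\varepsilon$. Viewing $T$ as a map into $X$ via the inclusion $Y\subset X$, I first invoke the assumption that $X$ is Gurari\u{\i} (respectively $L_1$-predual) to obtain an extension $\hat T:F\to X$ of $T$ satisfying $(1+\varepsilon_1)^{-1}\|x\|\le \|\hat Tx\|\le (1+\varepsilon_1)\|x\|$ (respectively $\|\hat Tx\|\le(1+\varepsilon_1)\|x\|$) for all $x\in F$. Next, applied to the finite-dimensional subspace $G:=\hat T(F)\subset X$ with tolerance $\varepsilon_2$, the ai-ideal property of $Y$ in $X$ supplies a bounded linear operator $S:G\to Y$ which is a $(1+\varepsilon_2)$-isomorphic embedding and fixes $G\cap Y$ pointwise.

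The crucial observation is that $T(E)=\hat T(E)\subset G$ and $T(E)\subset Y$, so $T(E)\subset G\cap Y$; hence $S$ fixes $T(E)$ pointwise. Defining $\tilde T:=S\circ \hat T:F\to Y$, I get $\tilde T|_E=S\circ T=T$, and $\tilde T$ is a composition of a $(1+\varepsilon_1)$-isomorphic embedding with a $(1+\varepsilon_2)$-isomorphic embedding, so it is a $(1+\varepsilon)$-isomorphic embedding (respectively has norm at most $(1+\varepsilon)$, using that in the $L_1$-predual case one only needs the upper bound $\|S\|\le 1+\varepsilon_2$, which is part of the ai-ideal conclusion). Since $F$ and $E$ were arbitrary, $Y$ inherits the extension property, and therefore it is a Gurari\u{\i} (respectively $L_1$-predual) space.

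There is no real obstacle here; the argument is essentially a two-step composition, mirroring the equivalence ``Gurari\u{\i} $\Leftrightarrow$ ai-ideal in every containing space''. The only point that must be checked is that the set $T(E)$ on which the first extension is forced to agree with $T$ lies inside the intersection $G\cap Y$ where the ai-ideal retraction $S$ acts as the identity, and this is immediate from the fact that $\hat T$ extends $T$ and that $T$ already takes values in $Y$.
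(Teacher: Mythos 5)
Your argument is correct and is essentially the paper's own proof: extend $T$ to $\hat T:F\to X$ using the Gurari\u{\i} (resp. $L_1$-predual) property of $X$, then use the ai-ideal property of $Y$ applied to the finite-dimensional subspace $\hat T(F)$ to obtain $S$ fixing $\hat T(F)\cap Y\supset T(E)$, and compose. The only cosmetic difference is your choice of two tolerances $\varepsilon_1,\varepsilon_2$ versus the paper's single $\delta<\sqrt{1+\epsilon}-1$.
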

\begin{proof}
Given $\epsilon > 0$ and a finite dimensional Banach space $E$ we let $T:E\to Y$ be a linear isometric embedding. If $F$ denotes a finite dimensional Banach containing $E$, since $X$ is a Gurari\u{\i} space (an $L_1$-predual space), for $0<\delta<\sqrt{1+\epsilon}-1$, $T$ admits a linear extension $\hat{T}: F\to X$ of $T$ with $(1+\delta)^{-1}\|x\| \leq \|T(x)\|\leq (1 + \delta)\|x\|$ ($\|T(x)\|\leq (1 + \delta)\|x\|$) for all $x\in E$. Since $Y$ is an ai-ideal in $X$, there exists an $(1+\delta)$-isomorphic embedding $S:\hat{T}[F] \to Y$ such that $S(x)=x$ for every $x\in Y\cap \hat{T}[F]$. Hence $S\circ \hat{T}: F \to Y$ is a linear extension of $T$ satisfying $(1 + \epsilon)^{-1}\|x\|\leq (1 + \delta)^{-2}\|x\|\leq \|S\circ T(x)\| \leq (1 + \delta)^2\|x\|\leq (1 + \epsilon)\|x\|$ ($\|S\circ T(x)\| \leq (1 + \delta)^2\|x\|\leq (1 + \epsilon)\|x\|$) for all $x\in F$. 
\end{proof}

From the results established in this paper, we can obtain a slight strengthening of \cite[Theorem 3.4]{GarbuKubis} and \cite[Theorem 2.6 and Theorem 2.8]{Ban18}.

\begin{thm}\label{thm:gurarii}
Let $X$ be a non-separable Banach space. Then there exists an exceedingly rich family $\SSS$ of subspaces of $X$ such that, for every $Y\in \SSS$ the following holds
\[\begin{split}
X \text{ is a Gurari\u{\i} space}  & \Leftrightarrow Y \text{ is a Gurari\u{\i} space},\\
X \text{ is an }L_1\text{-predual space}  & \Leftrightarrow Y \text{ is an } L_1\text{-predual space.}
\end{split}\]
\end{thm}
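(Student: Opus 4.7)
The plan is to take for $\SSS$ the exceedingly rich family of ai-ideals in $X$ supplied by Theorem~\ref{Thm:ExeedRichFamilyai}. Since every ai-ideal in $X$ is in particular an ideal in $X$, the same family $\SSS$ will witness both stated equivalences.

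Fix $Y \in \SSS$. The forward implications---that $X$ being Gurari\u{\i} (resp.\ being an $L_1$-predual) forces the same property on $Y$---follow immediately from Lemma~\ref{lem:inheritGurarii}, since $Y$ is by construction an ai-ideal (resp.\ ideal) in $X$. This reduces the theorem to proving the two backward implications.

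I will spell out the backward direction for the Gurari\u{\i} case; the $L_1$-predual case is handled identically, using the ``ideal'' variants of every step and dropping the lower isomorphism bounds throughout. Assume $Y \in \SSS$ is Gurari\u{\i}. Working with the extension characterization recalled just before Lemma~\ref{lem:inheritGurarii}, I fix finite-dimensional $E \subset F$, an isometric embedding $T : E \to X$, and $\varepsilon > 0$, and aim to construct a $(1+\varepsilon)$-isomorphic extension $\hat T : F \to X$ of $T$. The three-step scheme is: (i)~apply the ai-ideal property of $Y$ in $X$ to the finite-dimensional subspace $T[E] \subset X$ with a small parameter $\delta > 0$ to get a $(1+\delta)$-isomorphic map $S : T[E] \to Y$ which is the identity on $T[E]\cap Y$, so that $S \circ T : E \to Y$ is a $(1+\delta)$-isomorphic embedding into the Gurari\u{\i} space $Y$; (ii)~use the standard amplification of the Gurari\u{\i} extension property---which allows the initial embedding to be $(1+\delta)$-isomorphic rather than exactly isometric, at the cost of a controlled loss---to extend $S \circ T$ to a $(1+\delta')$-isomorphic map $\hat V : F \to Y \subset X$, with $\delta' \to 0$ as $\delta \to 0$; (iii)~apply the ai-ideal property of $Y$ in $X$ once more, this time to the finite-dimensional subspace $\hat V[F] + T[E] \subset X$, to obtain a $(1+\delta'')$-isomorphism fixing $\hat V[F]$ pointwise, and use its inverse on the image to correct $\hat V$ into a genuine extension $\hat T$ of $T$ rather than of $S \circ T$.

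The main obstacle will be the correction step~(iii). The map $S$ from step~(i) need not be close to the inclusion $T[E] \hookrightarrow X$ in operator norm---it is only $(1+\delta)$-isomorphic---so $\hat V$ can deviate from a true extension of $T$ by a quantity which is not automatically small on $E$, and a naive linear lifting of $S^{-1}$ along a projection $F \to E$ would introduce an error proportional to the projection constant, spoiling the isomorphism bound. The delicate point will be to arrange the second ai-ideal application so that the correction involves only the multiplicative constants $\delta, \delta', \delta''$ (and no norm of a projection) and so that shrinking $\delta$ at the outset drives the final constant below $1+\varepsilon$.
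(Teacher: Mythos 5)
The forward implications in your proposal are fine and coincide with the paper's argument via Lemma~\ref{lem:inheritGurarii}. The backward direction, however, cannot be completed along the lines you sketch, and the obstacle you flag in step~(iii) is not merely delicate but fatal: the implication ``$Y$ is an ai-ideal in $X$ and $Y$ is Gurari\u{\i} $\Rightarrow$ $X$ is Gurari\u{\i}'' is false. As the paper notes just before Lemma~\ref{lem:inheritGurarii}, a Gurari\u{\i} space is an ai-ideal in \emph{every} Banach space containing it; so, for instance, the separable Gurari\u{\i} space $G$ is an ai-ideal in $X=G\oplus_1\Rea$, which is not Gurari\u{\i} (the isometry $t\mapsto(0,t)$ of $\Rea$ onto the second summand admits no $(1+\varepsilon)$-isomorphic extension to $\ell_\infty^2$ for small $\varepsilon$, since the images of $(1,\pm1)$ would force $\hat T(0,1)$ to have small first coordinate and a second coordinate near $\pm 1$, making $\|\hat T(1,1)\|$ or $\|\hat T(1,-1)\|$ close to $2$). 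Since your scheme uses only the two hypotheses ``$Y$ ai-ideal in $X$'' and ``$Y$ Gurari\u{\i}'', no choice of $\delta,\delta',\delta''$ can make it work: concretely, the map $S$ from step~(i) is only required to fix $T[E]\cap Y$ and can move the rest of $T[E]$ far away in norm, so $\hat V$ is an extension of $S\circ T$ that bears no additive relation to $T$, and post-composing with maps fixing $\hat V[F]$ cannot repair this.

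The paper sidesteps the backward implication entirely by a case distinction on $X$. If $X$ is Gurari\u{\i} (resp.\ an $L_1$-predual), the family of ai-ideals works because \emph{both} sides of the equivalence are true for every member, by Lemma~\ref{lem:inheritGurarii}. If $X$ is not, one fixes a witnessing configuration --- finite-dimensional $E\subset F$, an isometric $T:E\to X$, and $\varepsilon>0$ for which no $(1+\varepsilon)$-isomorphic extension $F\to X$ exists --- and takes $\SSS$ to be all closed subspaces of $X$ containing $E\cup F\cup T[E]$; this is exceedingly rich, and every member fails the property because an extension into $Y\subset X$ would in particular be an extension into $X$. Finally the families for the two properties are intersected using Lemma~\ref{lem:combineRich}. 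You should adopt this case-distinction structure; the point of the theorem is the existence of a family realizing the equivalence, not a structural transfer of the property from an ai-ideal back to the ambient space.
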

\begin{proof}
We start with handling the first equivalence by distiguishing two cases. In the case that $X$ is Gurari\u{\i} space, by Theorem~\ref{thm:richAiIdeals} there is exceedingly rich family $\SSS_1$ such that every $Y\in\SSS_1$ is ai-ideal in $X$ and therefore, by Lemma~\ref{lem:inheritGurarii}, it is Gurari\u{\i} space. Thus, we have that for any $Y\in\SSS_1$ it holds that $X$ is Gurari\u{\i} space if and only if $Y$ is Gurari\u{\i} space.

In the case that $X$ is not Gurari\u{\i} space, there is $\epsilon>0$, finite dimensional $E$ and $F$, with $E\subset F$, and an isometric linear embedding $T:E\to X$ that cannot be extended to an $(1+\epsilon)$-isomorphic embedding from $F$ into $X$. Then we let 
\[\SSS_1:=\{V\subset X\colon V\text{ is a closed subspace of }X \text{ and contains } E\cup F\cup T(E)\}\]
and easily check that $\SSS_1$ is exceedingly rich family with $V$ not being Gurari\u{\i} for $V\in \SSS_1$. Thus, we have that for any $Y\in\SSS$ it holds that $X$ is not Gurari\u{\i} if and only if $Y$ is not Gurari\u{\i}.

Thus, in any case there exists exceedingly rich family $\SSS_1$ such that for any $Y\in\SSS_1$ we have that $X$ is Gurari\u{\i} if and only if $Y$ is Gurari\u{\i}. Similarly, we find exceedingly rich family $\SSS_2$ such that for any $Y\in\SSS_2$ we have that $X$ is $L_1$-predual if and only if $Y$ is $L_1$-predual. Finally it suffices to put $\SSS:=\SSS_1\cap \SSS_2$, which is exceedingly rich family by Lemma~\ref{lem:combineRich}.
\end{proof}

A Banach space $X$ is said to have the \emph{Daugavet property} if for every rank-one operator $T:X\to X$ satisfies  
$\|\Id_X-T\|=1+\|T\|$. It is well-known, see \cite[Lemma 2.2]{Kadets2000}, that $X$ has the Daugavet property if and only if, for every $y \in S_X$, $x^* \in S_{X^*}$ and $\epsilon > 0$ there exists $x \in S_X$ such that
$x^*(x) \geq 1-\epsilon$ and $\|x + y\| \geq 2 - \epsilon$.

By employing the results from this paper, we establish the following.

\begin{thm}\label{thm:daugavet}
Let $X$ be a non-separable Banach space. Then there exists an exceedingly rich family $\SSS$ of subspaces of $X$ such that, for every $Y\in \SSS$ the following holds
\[
X \text{ has the Daugavet property} \Leftrightarrow Y\text{ has the Daugavet property}.
\]
\end{thm}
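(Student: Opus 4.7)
My plan is to follow the two-case template of Theorem~\ref{thm:gurarii}, splitting on whether $X$ itself has the Daugavet property, and in each case building a single exceedingly rich family all of whose members share the status of $X$. This way a separate family will handle each direction of the biconditional.

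In the case that $X$ has the Daugavet property, I would simply take the exceedingly rich family $\SSS$ of ai-ideals in $X$ provided by Theorem~\ref{Thm:ExeedRichFamilyai}. The result of Abrahamsen, Lima and Nygaard from \cite{Abr14} (recalled in the Introduction) that ai-ideals inherit the Daugavet property then gives that every $Y\in\SSS$ has Daugavet, so the desired biconditional holds trivially for all members of $\SSS$.

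In the case that $X$ fails Daugavet, I would use the characterization stated just before the theorem to extract a concrete triple $(y_0,x_0^*,\epsilon_0)\in S_X\times S_{X^*}\times(0,\infty)$ such that $\|x+y_0\|<2-\epsilon_0$ whenever $x\in S_X$ satisfies $x_0^*(x)\geq 1-\epsilon_0$. To transport this witness into every member of the family, I would further fix a sequence $(z_n)\subset S_X$ with $x_0^*(z_n)\to 1$, set $Z:=\{y_0\}\cup\{z_n\colon n\in\Nat\}$, and define
$$
\SSS:=\{V\subset X\colon V\text{ is a closed subspace with }Z\subset V\}.
$$
Condition \ref{it:cofinalFirst} is witnessed by $\overline{\Span(W\cup Z)}$ for a given $W\subset X$, and \ref{it:upDirectedRich} is automatic since $Z$ persists under up-directed unions and closures; hence $\SSS$ is exceedingly rich.

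The key step is then to check that no $V\in\SSS$ can have Daugavet. For any such $V$ one has $y_0\in V$, and because $z_n\in S_V$ with $x_0^*(z_n)\to 1$, the restriction $x_0^*|_V$ lies in $S_{V^*}$. If $V$ were to have Daugavet, applying the characterization inside $V$ to $y_0$, $x_0^*|_V$ and $\epsilon_0$ would produce some $x\in S_V\subset S_X$ with $x_0^*(x)\geq 1-\epsilon_0$ and $\|x+y_0\|_V=\|x+y_0\|_X\geq 2-\epsilon_0$, directly contradicting the choice of $(y_0,x_0^*,\epsilon_0)$ for $X$. The only mildly delicate point in the whole argument is the decision to prescribe a norming sequence for $x_0^*$ as forced contents of $V$; this is precisely what ensures $\|x_0^*|_V\|=1$ and lets the obstruction in $X$ be read off inside each $V\in\SSS$.
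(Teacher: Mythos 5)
Your proposal is correct and follows essentially the same two-case argument as the paper: the positive case via the exceedingly rich family of ai-ideals (the paper cites Theorem~\ref{thm:richAiIdeals}, you cite Theorem~\ref{Thm:ExeedRichFamilyai}, which is the more directly applicable statement) together with the inheritance result from \cite{Abr14}, and the negative case by fixing a witness $(y_0,x_0^*,\epsilon_0)$ plus a countable norming set for $x_0^*$ and taking all closed subspaces containing these data. Your sequence $(z_n)$ with $x_0^*(z_n)\to 1$ plays exactly the role of the paper's countable set $D$ witnessing $\|x^*\|=1$.
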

\begin{proof}Similarly as in the proof of Theorem~\ref{thm:gurarii}, we distiguish two cases.
If $X$ has the Daugavet property, then according to Theorem~\ref{thm:richAiIdeals} there exists 
an exceedingly rich family $\SSS$ such that every $Y\in\SSS$ is ai-ideal in $X$, and, by \cite[Proposition 3.8]{Abr14}, each of these ai-ideals inherits the Daugavet property. Thus, if $X$ has the Daugavet property, $\SSS$ does the job.

If $X$ does not have Daugavet property, then, based on the characterization mentioned above, there exist $y\in S_X$, $x^*\in S_{X^*}$ and $\epsilon>0$ such that, for every $x\in S_X$, $x^*(x)<1-\epsilon$ or $\|x + y\|<2 - \epsilon$.

We let $D\subset B_X$ be a countable set witnessing that $\|x^*\|=1$, that is, $\sup_{x\in D}\|x^*(x)\|=1$. The family $\SSS':=\{Y\subset X\colon $Y$\text{ is a closed subspace of }X\text{ and contains } \{y\}\cup D\}$ is readily seen to be an exceedingly rich family of closed subsets of $X$. Moreover, for each $Y\in \SSS'$ we have $\|x^*|_Y\|=1$, and since 
$S_Y\subset S_X$, for each $x\in S_Y$ we have $x^*(x)<1-\epsilon$ or $\|x + y\|<2 - \epsilon$, which proves that $Y$
does not have the Daugavet property. Thus, if $X$ does not have the Daugavet property, $\SSS'$ does the job.
\end{proof}

A metric space $X$ is said to be an \emph{absolute ai-local retract} if it is an ai-local retract in every metric space containing it. This concept was introduced in \cite[Definition 4.1]{QuilisZoca}. It was also established, see \cite[Theorem 4.5]{QuilisZoca}, that $X$ absolute ai-local retract, if and only if, for every finite subsets $E$, $F\subset X$, with $E\subset F$, every isometry $f:E\to X$ can be extended to an $(1+\epsilon)$-biLipschitz embedding $\widehat{f}:F\to X$.

From the results of the Section~\ref{sec:aiLocRetr}, we have the following application.

\begin{thm}\label{thm:urysohn}
Let $X$ be a non-separable metric space. Then there exists an exceedingly rich family $\SSS$ of subspaces of $X$ such that, for every $Y\in \SSS$ the following holds
\[
X \text{ is an absolute ai-local retract} \Leftrightarrow Y\text{ absolute an ai-local retract}.
\]
\end{thm}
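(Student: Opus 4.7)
The plan is to follow closely the strategy of Theorem~\ref{thm:gurarii} and Theorem~\ref{thm:daugavet}: split into two cases according to whether $X$ is an absolute ai-local retract, and in each case exhibit an exceedingly rich family witnessing both directions of the equivalence. Since we are determining a single property, no combination via Lemma~\ref{lem:combineRich} will be required.

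In the first case, I would assume $X$ is an absolute ai-local retract and invoke Theorem~\ref{thm:main2} (equivalently Theorem~\ref{Thm:ExeedRichFamilyMetric}) to produce an exceedingly rich family $\SSS$ whose members are ai-local retracts in $X$. The core task is the metric counterpart of Lemma~\ref{lem:inheritGurarii}: any ai-local retract $Y$ of an absolute ai-local retract $X$ is again an absolute ai-local retract. Using the characterization of absolute ai-local retracts recalled just before Theorem~\ref{thm:urysohn}, I would be given $\varepsilon>0$, finite sets $E\subset F\subset Y$ and an isometry $f:E\to Y$; I would pick $\delta>0$ with $(1+\delta)^2\leq 1+\varepsilon$, first extend $f$ (regarded as an isometry into $X$) to a $(1+\delta)$-biLipschitz embedding $g:F\to X$ using that $X$ is an absolute ai-local retract, and then apply the ai-local retract property of $Y$ in $X$ to the finite set $g(F)$ and the parameter $\delta$, producing a $(1+\delta)$-biLipschitz map $T:g(F)\to Y$ fixing $g(F)\cap Y$. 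Since $g(E)=f(E)\subset Y$, the composition $T\circ g:F\to Y$ is the desired $(1+\varepsilon)$-biLipschitz extension of $f$.

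In the second case, I would assume $X$ is not an absolute ai-local retract and fix, via the same characterization, a witness $(\varepsilon_0,E,F,f_0)$ consisting of $\varepsilon_0>0$, finite sets $E\subset F\subset X$ and an isometry $f_0:E\to X$ admitting no $(1+\varepsilon_0)$-biLipschitz extension $F\to X$. I would then set
\[
\SSS:=\{Y\subset X\colon Y\text{ is closed and }E\cup F\cup f_0(E)\subset Y\}.
\]
This is exceedingly rich because $E\cup F\cup f_0(E)$ is finite: condition~\ref{it:cofinalFirst} is witnessed by $\overline{Z\cup E\cup F\cup f_0(E)}\in\SSS$ for any $Z\subset X$, and condition~\ref{it:upDirectedRich} is immediate since the closure of an up-directed union of members of $\SSS$ is still a closed superset of $E\cup F\cup f_0(E)$. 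The same witness certifies that no $Y\in\SSS$ is an absolute ai-local retract, because any $(1+\varepsilon_0)$-biLipschitz extension of $f_0$ into $Y\subset X$ would in particular be such an extension into $X$.

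The main (minor) obstacle is the inheritance step in the first case, i.e.\ the metric counterpart of Lemma~\ref{lem:inheritGurarii}. It amounts to a two-step composition of nearly isometric embeddings, but one must check that the intermediate image $g(F)$ is handled as a finite subset of $X$ when invoking the ai-local retract property of $Y$, and that the fixed-point set of $T$ genuinely contains $f(E)$, so that $T\circ g$ really extends $f$.
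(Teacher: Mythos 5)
Your proposal is correct and follows essentially the same route as the paper: the paper also splits into the two cases, uses Theorem~\ref{Thm:ExeedRichFamilyMetric} plus a metric analogue of Lemma~\ref{lem:inheritGurarii} in the affirmative case, and in the negative case takes the exceedingly rich family of closed subsets containing $E\cup F\cup f_0(E)$ for a fixed non-extendable witness. Your explicit write-out of the inheritance step (the $(1+\delta)^2\le 1+\varepsilon$ composition and the check that $T$ fixes $g(E)=f(E)$) is exactly the argument the paper leaves implicit.
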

\begin{proof}
Similarly as in the proof of Theorem~\ref{thm:gurarii} by distinguishing two cases. If $X$ is an absolute ai-local retract, then from Theorem~\ref{Thm:ExeedRichFamilyMetric} we may fix a exceedingly rich family $\SSS$ comprising ai-local retracts of $X$. With an argument similar to Lemma~\ref{lem:inheritGurarii} we deduce that each $F\in \SSS$ is absolute ai-local retract.
On the other hand, assuming that $X$ is not an absolute ai-local retract, there exists $\epsilon>0$, finite sets $E$, $F\subset X$ and a isometric mapping $f:E:\to X$ that cannot be extended to a $(1+\epsilon)$-biLipschitz embedding. The family $\SSS=\{Y\subset X:Y\text{ contains }E\cup F\cup T(E)\}$ is clearly a exceedingly rich family 
such that $Y$ is not an absolute ai-local retract for all $Y\in \SSS$.
\end{proof}

\noindent{\bf Acknowledgements.}
We would like to thank Andr\'es Quilis and Abraham Rueda Zoca for sending us preliminary version of their preprint \cite{QuilisZoca}, which motivated us to write this paper. M. C\'uth would like to thank Universidade Federal de S\~{a}o Paulo - UNIFESP for their hospitality and generosity during his visit in November 2023, when a lot of the work on this paper was undertaken (part of the visit was covered by the grant PROAP/CAPES
no. 23089.032403/2023-95). Results from Section~\ref{sec:aiIdeals} were inspired by the thesis \cite{Smetana}, which was written by the third author under the supervision of the second author.

\end{document}